\newtheorem{thm}{Theorem}[section]
\newtheorem{coro}[thm]{Corollary}
\newtheorem{prop}[thm]{Proposition}
\newtheorem{lem}[thm]{Lemma}
\theoremstyle{definition}
\newtheorem{defi}[thm]{Definition}
\newtheorem{rmk}[thm]{Remark}
\newtheorem{cond}[thm]{Condition}
\newtheorem*{notation*}{Notation}
\newcommand{\G}{G}
\newcommand{\mcg}{\mathrm{MCG}(S)}
\newcommand{\pmf}{\mathcal{PMF}(S)}
\newcommand{\mf}{\mathcal{MF}(S)}
\newcommand{\ext}{\mathrm{Ext}}
\newcommand{\dt}{d_{\mathcal{T}}}
\newcommand{\Lip}{\mathrm{Lip}^{1}_{b}}
\newcommand{\hb}{\partial_{h}}
\newcommand{\PRs}{P\mathbb{R}_{\geq 0}^{\mathcal{S}}}
\title{On continuity of drifts of the mapping class group}
\author{Hidetoshi Masai}
\address{Department of Mathematics Tokyo Institute of Technology 2-12-1, Ookayama, Meguro-ku, Tokyo. 152-8551. Japan}
\email{masai@math.titech.ac.jp}
\begin{document}
\maketitle
\begin{abstract}
A random walk on a countable group $G$ acting on a metric space $X$ gives a characteristic called the drift which depends only on the transition probability measure $\mu$ of the random walk.
The drift is  the ``translation distance'' of the random walk.
In this paper, we prove that the drift varies continuously with the transition probability measures, 
under the assumption that the distance and the horofunctions on $X$ are expressed by certain ratios.
As an example, we consider
the mapping class group $\mcg$ acting on the Teichm\"uller space.
By using north-south dynamics, we also consider the continuity of the drift for a sequence converging to a Dirac measure.
As an appendix, we prove that the asymptotic entropy of the random walks on $\mcg$ varies continuously.
\end{abstract}

\section{Introduction}
For a group $G$ acting isometrically on a metric space $(X,d)$, 
the {\em drift} is one of the fundamental characteristics of random walks on $G$.
A random walk on $G$ can be specified by a probability measure $\mu$ on $G$, which will be the transition probability, and 
 $\mu$ induces a probability measure $\mathbb{P}$ on the space of sample paths $G^{\mathbb{Z}_{+}}$.
Let $\mu$ have finite first moment i.e. $\sum_{g\in G}\mu(g)d(gb,b)<\infty$,
where $b\in X$  is a base point.
Then the drift is defined for $\mathbb{P}$-a.e. sample path $\omega=(\omega_{n})_{n\in\mathbb{Z}_{+}}$, by
$$\ell(\mu):= \lim_{n\rightarrow\infty}\frac{d(\omega_{n}b,b)}{n}.$$
Here the notation $\ell(\mu)$ is reasonable as the drift depends only on the measure $\mu$, 
and is independent of the choice of the base point and sample paths almost surely.
Recall that a sequence $(\mu_{i})_{i\in\mathbb{N}}$ of probability measures on $G$ is said to {\it converge simply} to $\mu_{\infty}$ if 
$\mu_{i}(g)\rightarrow \mu_{\infty}(g)$ for every $g\in G$.
Given a sequence of measures $(\mu_{i})_{i\in\mathbb{Z}_{+}}$ converging simply to $\mu_{\infty}$,
it is natural to ask if $\ell(\mu_{i})$ converges to $\ell(\mu_{\infty})$.
When $G$ is Gromov hyperbolic acting on its Cayley graph, the continuity of the drift with respect to simple convergence of the probability measures is proved in \cite{EK,GMM}.

When we study random walks, it is often convenient (see e.g. \cite{MT}) to consider the {\em horofunctions} on $X$, and the {\em horoboundary} of $X$, see Section \ref{sec.horo} for definitions.
In this paper, we assume that the distance $d$ and the horofunctions are expressed in terms of certain ratios (see Section \ref{sec.conti} for a detail).
For example, Thurston's Lipschitz distance on the Teichm\"uller space $\mathcal{T}(S)$ 
between two points $x,y\in\mathcal{T}(S)$  is characterized in \cite{Thur} as 
$$d_{\mathrm{Th}}(x,y) = \log\sup_{\alpha\in\mathcal{S}}\frac{\mathrm{Len}_{y}(\alpha)}{\mathrm{Len}_{x}(\alpha)},$$
where $\mathcal{S}$ is the set of (isotopy classes of) simple closed curves and 
$\mathrm{Len}_{x}(\cdot)$ denotes the hyperbolic length given by $x\in\mathcal{T}(S)$.
In \cite{Wal}, Walsh showed that the horofunctions are also characterized similarly as a ratio, and thus the Teichm\"uller space equipped with Thurston's Lipschitz distance is an example which satisfies our assumption (see Section \ref{sec.Thur}).
Other examples discussed in this paper is the Teichm\"uller distance on the Teichm\"uller space.
The drift with respect to Thurston's Lipschitz distance is known to coincide with topological entropy and 
many other dynamical quantities, see e.g. \cite{Mas}.

Under the assumption that the distance and the horofunctions are written by certain ratios,
we prove that the drift $\ell(\mu)$ varies continuously with the probability measure $\mu$.
The key ingredient is the formula of the drift in terms of the integral of the Busemann cocycle (Lemma \ref{lem.drift}), which has already been observed in \cite{GMM} for the case of hyperbolic groups.
Lemma \ref{lem.drift} is a consequence of the comparison of the distance and the horofunction for a given sequence which converges to a ``boundary'' point (Lemma \ref{lem.compare}).
The versions of Lemma \ref{lem.compare} and Lemma \ref{lem.drift} 
for the Teichm\"uller distance 
is already given by Horbez \cite[Corollary 3.15.]{Hor}.
The proof by Horbez utilizes several deep works.
For example, the version of the Teichm\"uller distance is proved by using the recent work of Dowdall-Duchin-Masur \cite{DDM},
which gives a statistical hyperbolicity of the Teichm\"uller distance.
The purpose of this paper is to give a quick and unified proof of Lemma \ref{lem.compare} without using \cite{DDM}.
If $X$ is Gromov hyperbolic, Lemma \ref{lem.compare} follows from the standard arguments using $\delta$-thin triangles.
Therefore our discussion gives one quick way to observe ``statistical hyperbolicity'' of the Teichm\"uller space with respect to the distances mentioned above.
Furthermore, as we give a unified proof, we are able to observe an interesting statistical relationship between 
the hyperbolic length and the extremal length (Corollary \ref{cor. hyp and ext}).
In the above discussion, we suppose that the transition probabilities are non-elementary.
In section \ref{sec.NS}, we discuss the case where the transition probabilities converge to a Dirac measure 
whose mass is on an element $g\in G$ with north-south dynamics.
In this case, we prove that the drift converges to the translation distance of $g$.
This shows that the drift can be arbitrarily large,  any real number between translation distances, etc.

The paper is organized as follows.
In Section \ref{sec.horo}, we review basic facts of horofunctions.
Then in Section \ref{sec.conti}, we prove the continuity of the drift $\ell(\mu)$, under the assumption that the distance and  the horofunctions are written as certain ratios.
In Section \ref{sec.ex}, we discuss $\mcg$ and prove the continuity of the drift with respect to several distances.
In Section \ref{sec.NS}, degenerating sequences are discussed.
Finally, in the appendix, we prove the continuity of the asymptotic entropy in $\mcg$.

\section{Horofunction boundary}\label{sec.horo}
Let $(X,d)$ be a separable possibly asymmetric metric space.
A function $f:X\rightarrow \mathbb{R}$ is said to be {\em $1$-Lipschitz} if $|f(x)-f(y)|\leq d(x,y)$ for all $x,y\in X$.
Throughout the section, we fix a base point $b\in X$.
We define $$\Lip(X):=\{f:X\rightarrow\mathbb{R}\mid f \text{ is }1\text{-Lipschitz and }f(b)=0\}.$$
We consider the topology on $\Lip(X)$ by uniform convergence on compact sets.
\begin{prop}[see e.g. {\cite[Proposition 3.1]{MT}}]\label{prop.Lip-compact}
Let $(X,d)$ be a separable possibly asymmetric metric space.
Then $\Lip(X)$ is a compact Hausdorff second countable (hence metrizable) space.
\end{prop}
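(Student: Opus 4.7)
The plan is to deduce all three conclusions essentially from the Arzelà–Ascoli theorem together with separability of $X$. First, for compactness I would observe that $\Lip(X)$ is uniformly equicontinuous by definition, and pointwise bounded because $f(b)=0$ forces $|f(x)|=|f(x)-f(b)|\leq d(x,b)$ (and likewise $|f(x)|\leq d(b,x)$ in the asymmetric case). Given any sequence $(f_{n})\subset\Lip(X)$, I would fix a countable dense subset $D=\{x_{k}\}_{k\in\mathbb{N}}\subset X$ and extract by a standard diagonal argument a subsequence converging pointwise on $D$. The $1$-Lipschitz condition then propagates the pointwise convergence to all of $X$, and equicontinuity upgrades it to uniform convergence on compact sets; the limit $f$ is itself $1$-Lipschitz and satisfies $f(b)=0$, hence lies in $\Lip(X)$, so $\Lip(X)$ is sequentially compact.

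For the Hausdorff property, if $f\neq g$ in $\Lip(X)$ then $f(x)\neq g(x)$ at some $x\in X$, and pointwise evaluation at $x$ is continuous in the topology of uniform convergence on compact sets, so the two functions are separated by basic open sets. For second countability, the key observation is that on the equicontinuous family $\Lip(X)$ the topology of uniform convergence on compacta coincides with the topology of pointwise convergence on the countable dense set $D$: indeed, pointwise convergence on $D$ together with uniform equicontinuity implies uniform convergence on every compact subset of $X$. This latter topology is induced from the countable product $\mathbb{R}^{D}$, which is second countable, so a countable base on $\Lip(X)$ is obtained by restriction.

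Having compactness, Hausdorff separation, and second countability in hand, metrizability follows from Urysohn's metrization theorem; alternatively the metric $\rho(f,g):=\sum_{k\geq 1}2^{-k}\min\{1,|f(x_{k})-g(x_{k})|\}$ can be written down explicitly and checked to induce the topology. The main bookkeeping point, rather than a genuine obstacle, is the asymmetric case: one must make sure that ``separable'' refers to the appropriate (symmetrized) topology on $X$ and that compact subsets are understood in this topology. Since the defining inequality $|f(x)-f(y)|\leq d(x,y)$ places an absolute value on the left, swapping $x$ and $y$ yields $|f(x)-f(y)|\leq \min\{d(x,y),d(y,x)\}$, so both pointwise bounds on $|f|$ are automatically available and each step of the argument goes through unchanged.
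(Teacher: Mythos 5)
Your argument is correct, but note that the paper does not actually prove this proposition: it is quoted verbatim from Maher--Tiozzo \cite[Proposition 3.1]{MT} and used as a black box, so there is no in-paper proof to compare against. Your self-contained Arzel\`a--Ascoli route is the standard one and all the essential points are in place: the uniform bound $|f(x)|\leq\min\{d(x,b),d(b,x)\}$ from $f(b)=0$, the diagonal extraction over a countable dense set $D$, the propagation of convergence by equi-Lipschitzness, and the identification of the compact-open topology with the topology of pointwise convergence on $D$ (which is what makes $\Lip(X)$ second countable as a subspace of $\mathbb{R}^{D}$). Two small points of bookkeeping deserve to be made explicit. First, sequential compactness implies compactness only once you know the space is second countable (or metrizable), so the logical order of your paragraphs should be: second countability and Hausdorffness first, then sequential compactness, then compactness; as written the compactness claim slightly precedes its justification. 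Second, in the step ``pointwise convergence on $D$ plus equicontinuity implies uniform convergence on compact $K$,'' the set $D\cap K$ need not be dense in $K$; the correct statement is that every point of $K$ lies within $\epsilon$ of some point of $D$, and one combines this with total boundedness of $K$ (in the symmetrized metric, as you rightly insist) in the usual three-$\epsilon$ estimate. Neither issue is a gap, only a matter of phrasing. Your closing remark that the absolute value in $|f(x)-f(y)|\leq d(x,y)$ makes the Lipschitz condition automatically symmetric, so that the asymmetry of $d$ costs nothing here, is exactly the right observation and is consistent with how the paper uses $d_{\mathrm{sym}}$ in Lemma \ref{lem.horo}.
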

Using horofunctions which we now define, we embed $X$ into $\Lip(X)$.
\begin{defi}
A {\em horofunction} determined by $z\in X$ is a function $\psi_{z}:X\rightarrow\mathbb{R}$ with
$\psi_{z}(x) = d(x,z)-d(b,z)$ for any $x\in X$.
\end{defi}
By the triangle inequality, we see that $\psi_{z}\in\Lip(X)$ for every $z\in X$.
We define symmetrised  metric $d_{\mathrm{sym}}$ by $d_\mathrm{sym}(x,y) = d(x,y)+d(y,x)$.
\begin{lem}[{\cite[Proposition 2.1 and 2.2]{Wal}}]\label{lem.horo}
Let $(X,d)$ be a geodesic separable possibly asymmetric metric space.
The map $\psi:X\rightarrow \Lip(X)$ defined by $\psi(z):=\psi_{z}$ is continuous and injective.
Furthermore, if 
\begin{itemize}
\item $(X,d_\mathrm{sym})$ is a proper space (i.e. every closed metric ball is compact) and 
\item for any $x$ and sequence $x_{n}$, we have $d(x_{n},x) \rightarrow 0$ if and only if $d(x, x_{n}) \rightarrow 0$
\end{itemize}
then $\psi$ is a homeomorphism onto its image.
\end{lem}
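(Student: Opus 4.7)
The plan is to handle the three assertions — continuity, injectivity, and (under the extra hypotheses) homeomorphism onto image — in sequence, with only the last requiring real work.

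For continuity, I would estimate directly. Writing
$$\psi_z(x) - \psi_{z'}(x) = \bigl(d(x,z) - d(x,z')\bigr) - \bigl(d(b,z) - d(b,z')\bigr),$$
the asymmetric triangle inequality gives $|d(x,z) - d(x,z')| \leq \max\{d(z,z'),d(z',z)\} \leq d_\mathrm{sym}(z,z')$, uniformly in $x$, and the same bound for the second summand. Hence $\sup_{x\in X}|\psi_z(x) - \psi_{z'}(x)| \leq 2\, d_\mathrm{sym}(z,z')$, so $\psi$ is $2$-Lipschitz from $(X,d_\mathrm{sym})$ into $\Lip(X)$ with the uniform-sup norm, which in particular yields continuity in the compact-open topology used on $\Lip(X)$.

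For injectivity, I would plug special points into the identity $\psi_{z_1} = \psi_{z_2}$. Evaluating at $x=z_2$ gives $d(z_2,z_1) - d(b,z_1) = -d(b,z_2)$, whence $d(z_2,z_1) = d(b,z_1) - d(b,z_2)$; by symmetry $d(z_1,z_2) = d(b,z_2) - d(b,z_1)$. Adding these produces $d(z_1,z_2) + d(z_2,z_1) = 0$, and non-negativity of $d$ forces $z_1 = z_2$.

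For the homeomorphism statement, assume $\psi_{z_n} \to \psi_z$ in $\Lip(X)$; the goal is $z_n \to z$. By a subsequence argument it suffices to show (i) every subsequence of $(z_n)$ has a further subsequence converging in $X$, and (ii) any such subsequential limit equals $z$. Point (ii) is immediate: continuity of $\psi$ yields $\psi_{z'} = \psi_z$ for the limit $z'$, and injectivity then forces $z' = z$. For (i), if $\{z_n\}$ is eventually $d_\mathrm{sym}$-bounded then properness furnishes a $d_\mathrm{sym}$-convergent subsequence, and the second hypothesis guarantees that the resulting limit is also a $d$-limit. So the real task is to rule out $d_\mathrm{sym}(b,z_n) \to \infty$. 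Here I would invoke the geodesic hypothesis: take a $d$-geodesic $\gamma_n$ from $b$ to $z_n$ and, once $d(b,z_n) > R := d(b,z)+1$, set $y_n := \gamma_n(R)$, so that by the geodesic identity $\psi_{z_n}(y_n) = d(y_n,z_n) - d(b,z_n) = -R$. Using the two hypotheses to extract a subsequential limit $y$ of $(y_n)$ and passing to the limit in $\psi_{z_n}(y_n)$ (with a small argument that the $y_n$ approach $y$ in the compact-open sense), one gets $\psi_z(y) = -R$; but $\psi_z(y) = d(y,z) - d(b,z) \geq -d(b,z) > -R$, a contradiction. The case in which $d(b,z_n)$ stays bounded but $d(z_n,b) \to \infty$ is handled in parallel, with the second hypothesis of the lemma used to move between the two directions.

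The main obstacle is this last compactness step: producing a limit of $(y_n)$ requires controlling the sequence in $d_\mathrm{sym}$, and in the asymmetric setting $d(b,y_n) = R$ does not on its own bound $d(y_n,b)$. The two hypotheses of the lemma are tailor-made to bridge this gap — properness of $d_\mathrm{sym}$ plus the equivalence of one-sided convergences — and identifying exactly where each is used is the crux of the argument.
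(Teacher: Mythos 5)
The paper does not prove this lemma; it is quoted from Walsh \cite{Wal}, so there is no in-paper argument to compare against. On its own terms, your treatment of the first two assertions is complete and correct: the bound $\sup_x|\psi_z(x)-\psi_{z'}(x)|\le 2\,d_{\mathrm{sym}}(z,z')$ gives continuity, and evaluating $\psi_{z_1}=\psi_{z_2}$ at $z_1$ and $z_2$ and adding gives $d(z_1,z_2)+d(z_2,z_1)=0$, hence injectivity (note this part needs neither geodesicity nor the extra hypotheses).

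The third assertion, however, is left with a genuine gap, and you say so yourself: the entire argument hinges on extracting a $d_{\mathrm{sym}}$-convergent subsequence of the points $y_n=\gamma_n(R)$, but the only control you have on them is $d(b,y_n)=R$. Properness is a statement about $d_{\mathrm{sym}}$-balls, and the second hypothesis of the lemma is a purely local statement about sequences with $d(x_n,x)\to 0$; neither gives any bound on $d(y_n,b)$ in terms of $d(b,y_n)$, so the forward ball $\{y: d(b,y)\le R\}$ is not known to be $d_{\mathrm{sym}}$-bounded and the compactness you need is not available from what you have written. Declaring that the hypotheses are ``tailor-made to bridge this gap'' is not a proof of that claim --- it is precisely the crux, and it is missing. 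The same problem recurs, in worse form, in the case you dismiss as ``handled in parallel'': if $d(b,z_n)$ stays bounded while $d(z_n,b)\to\infty$, the geodesics from $b$ to $z_n$ have bounded $d$-length, so the point $\gamma_n(R)$ need not even exist, and the computation $\psi_{z_n}(y_n)=-R$ has no analogue; one can only deduce that $d(z,z_n)$ stays bounded while $d(z_n,z)\to\infty$, a situation the convergence hypothesis (which only speaks of distances tending to $0$) does not touch. To repair the argument you must either prove, from the stated hypotheses, that sets of the form $\{y: d(x,y)\le R\}$ are relatively compact, or restructure the escape-to-infinity step so that the auxiliary points are confined to a fixed $d_{\mathrm{sym}}$-compact set from the outset; as it stands, the proof of the homeomorphism statement is incomplete.
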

By Proposition \ref{prop.Lip-compact} and Lemma \ref{lem.horo}, the closure $\overline{\psi(X)}$ of $\psi(X)$ in $\Lip(X)$ is compact.
The space $\partial_{h}X:=\overline{\psi(X)}\setminus\psi(X)$ is called the {\em horoboundary} of $X$.
From now on, we suppose that $X$ is proper and geodesic.
\begin{notation*}
By abuse of notations, we write $\overline{\psi(X)}$ by $X\cup\hb X$.
For $z\in X\cup\hb X$, we write the associated horofunction by $\psi_{z}$.
\end{notation*}

Let $G$ be a group acting on $X$ by isometries.

\begin{lem}[{\cite[Lemma 3.4]{MT}}]\label{lem.action}
Let $G$ be a group of isometries of $X$. 
Then the action of $G$ on $X$ extends to a continuous action by homeomorphisms on $\overline{\psi(X)}$, defined as
$$g\cdot \psi(z):=\psi(g^{-1}z)-\psi(g^{-1}b)$$
for each $g\in G$ and $\xi\in\overline{\psi(X)}$.
\end{lem}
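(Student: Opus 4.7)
The plan is to unpack the formula in the statement as the natural action of $G$ on functions, verify it is well-defined on the ambient space $\Lip(X)$, and then transfer the properties (being an action, continuity) from $\Lip(X)$ down to $\overline{\psi(X)}$ using the fact that the formula already extends the natural action on $X$.

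First I would recast the definition as acting on arbitrary $\xi\in\Lip(X)$ by
\[
(g\cdot\xi)(x):=\xi(g^{-1}x)-\xi(g^{-1}b),
\]
noting that this agrees with the formula stated in the lemma when $\xi=\psi_{z}$. I would check that $g\cdot\xi\in\Lip(X)$: the composition $\xi\circ g^{-1}$ is $1$-Lipschitz because $g^{-1}$ is an isometry and $\xi$ is $1$-Lipschitz, and subtracting the constant $\xi(g^{-1}b)$ normalises the value at $b$ to zero. Next I would verify compatibility with the original action on $X$: for $z\in X$,
\[
\psi_{gz}(x)=d(x,gz)-d(b,gz)=d(g^{-1}x,z)-d(g^{-1}b,z)=\psi_{z}(g^{-1}x)-\psi_{z}(g^{-1}b)=(g\cdot\psi_{z})(x),
\]
so $g\cdot\psi(z)=\psi(gz)$. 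In particular $\psi(X)$ is $G$-invariant under the new definition.

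The group-action axioms are then a short direct computation. For $g,h\in G$ and $\xi\in\Lip(X)$,
\[
(g\cdot(h\cdot\xi))(x)=(h\cdot\xi)(g^{-1}x)-(h\cdot\xi)(g^{-1}b)=\xi(h^{-1}g^{-1}x)-\xi(h^{-1}g^{-1}b)=((gh)\cdot\xi)(x),
\]
with the intermediate terms $\xi(h^{-1}b)$ cancelling; and the identity clearly acts trivially.

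For continuity I would use that the topology on $\Lip(X)$ is uniform convergence on compact sets. If $\xi_{n}\to\xi$ in $\Lip(X)$ and $K\subset X$ is compact, then $g^{-1}K$ is compact (since $g^{-1}$ is a homeomorphism of $X$), so $\xi_{n}\circ g^{-1}\to\xi\circ g^{-1}$ uniformly on $K$; subtracting the convergent numerical sequence $\xi_{n}(g^{-1}b)\to\xi(g^{-1}b)$ then gives $g\cdot\xi_{n}\to g\cdot\xi$ uniformly on $K$. Hence each $g$ acts continuously on $\Lip(X)$, and with inverse given by the continuous action of $g^{-1}$, so each $g$ is a self-homeomorphism of $\Lip(X)$. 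Because this self-homeomorphism preserves $\psi(X)$, it also preserves its closure $\overline{\psi(X)}=X\cup\hb X$, yielding the desired continuous action by homeomorphisms.

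I do not anticipate any real obstacle here: every step is a direct unpacking of definitions, and the only place where a tiny amount of care is needed is checking that the compact-open topology on $\Lip(X)$ interacts well with precomposition by $g^{-1}$, which is immediate once one recalls that isometries map compact sets to compact sets.
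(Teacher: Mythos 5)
Your proof is correct and complete. The paper itself offers no argument for this lemma --- it is quoted directly from Maher--Tiozzo \cite[Lemma 3.4]{MT} --- and your verification (well-definedness of $g\cdot\xi$ in $\Lip(X)$, compatibility $g\cdot\psi_{z}=\psi_{gz}$, the cocycle-style cancellation giving the action axiom, and continuity via precomposition with the isometry $g^{-1}$ preserving uniform convergence on compacts, hence preservation of the closure $\overline{\psi(X)}$) is exactly the standard argument used there; the only point worth making explicit is that you read the displayed formula as $(g\cdot\xi)(z)=\xi(g^{-1}z)-\xi(g^{-1}b)$, which is indeed the intended meaning of the paper's slightly garbled notation.
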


A function $c:G\times \partial_{h}X\rightarrow \mathbb{R}$ is called a {\em cocycle} if it satisfies the cocycle property:
$$c(gh,{\xi}) = c(g,h\cdot {\xi})+c(h,{\xi}),$$
for every $g,h\in G$ and $\psi_{\xi}\in\partial_{h}X$.
The {\em Busemann cocycle} plays a key role in this paper.
\begin{defi}
The {\em Busemann cocycle} is a cocycle defined by $$c_{B}(g,{\xi}) = \psi_{\xi}(g^{-1}b).$$
\end{defi}


\section{Continuity of drift}\label{sec.conti}
\subsection{Compare distance and horofunctions}
Let $(X,d)$ be a connected, proper, separable, geodesic, possibly asymmetric metric space on which a countable group $G$ acts by isometries.
If the distance is asymmetric, we also suppose that for any $x\in X$ and sequence $x_{n}$, 
$d(x_{n},x) \rightarrow 0$ if and only if $d(x, x_{n}) \rightarrow 0$ so that Lemma \ref{lem.horo} applies.
In this section, we consider the case where the following two conditions hold.
\begin{cond}[Distance is characterized by a ratio]\label{cond.dist}
There are a set $\mathcal{S}$ and a  map
$X\rightarrow \mathbb{R}_{>0}^{\mathcal{S}}$ denoted by $x\mapsto i(x,\cdot)$ such that 
for each $x\in X$, $\inf_{\alpha\in\mathcal{S}} i(x,\alpha)>0$, and
for any $x,y\in X$,
\begin{equation}
d(x,y)=\log\sup_{\alpha\in\mathcal{S}}\frac{i(y,\alpha)}{i(x,\alpha)}.\label{eq.dist}
\end{equation}
\end{cond}
\begin{notation*}
By equation (\ref{eq.dist}), for any $x,y\in X$ and $\epsilon>0$, there  exists $\alpha_{(x,y,\epsilon)}\in\mathcal{S}$
such that 
\begin{equation}
d(x,y)\leq \log\frac{i(y,\alpha_{(x,y,\epsilon)})}{i(x,\alpha_{(x,y,\epsilon)})}+\epsilon.\label{eq.epsilon}
\end{equation}
There can be several elements in $\mathcal{S}$ that satisfy (\ref{eq.epsilon}),
and by $\alpha_{(x,y,\epsilon)}$, we denote one of them.
\end{notation*}

Let $\PRs$ denote the quotient $\mathbb{R}_{\geq 0}^{\mathcal{S}}/\mathbb{R}_{>0}$ with respect to the diagonal action.
\begin{cond}[Horofunction is characterized by a ratio]\label{cond.horo}
We can extend $i(x,\cdot)$ to $\partial_{h}X$ projectively i.e. 
$\hb X\ni \xi\mapsto i(\xi,\cdot)\in \PRs$, so that 
every horofunction is characterized in terms of a ratio:
for any $\xi\in X\cup\hb X$,
\begin{equation}
\psi_{\xi}(x) = \log\left(\sup_{\alpha\in\mathcal{S}}\frac{i(\xi,\alpha)}{i(x,\alpha)} 
\bigg/\sup_{\alpha\in\mathcal{S}}\frac{i(\xi,\alpha)}{i(b,\alpha)}\right).\label{eq.horo}
\end{equation}
Here the right-hand side is well-defined as we have $i(\xi,\cdot)$ both in the numerator and the denominator.
\end{cond}

In Section \ref{sec.ex}, we consider Thurston's Lipschitz distance and the Teichm\"uller distance on the Teichm\"uller space.
Those two distances satisfy Condition \ref{cond.dist} and Condition \ref{cond.horo}.

We compare horofunctions and the distance as follows.
\begin{lem}\label{lem.compare}
Suppose that $(X,d)$ satisfies Condition \ref{cond.dist} and \ref{cond.horo}.
Let $\xi\in X\cup\hb X$.
We fix a representative $i(\xi,\cdot)$ in $\mathbb{R}_{\geq 0}^{\mathcal{S}}$.
Suppose that a sequence $x = (x_{n})_{n\in\mathbb{Z}_{+}}$ of elements in $G$ satisfies the following condition.
\begin{cond}\label{cond.seq}
There exist $\epsilon>0$ and $C'>0$ such that $\alpha_{n}:=\alpha_{(x_{n},b,\epsilon)}$ satisfies
$$\frac{i(\xi,\alpha_{n})}{i(b,\alpha_{n})}\geq C',$$
for all $n\in\mathbb{Z}_{+}$.
\end{cond}
Then there exists a constant $C = C(\xi,\omega,C',\epsilon)$ 
which is independent of $n$ such that
\begin{equation}
d(x_{n},b)\geq \psi_{\xi}(x_{n})\geq d(x_{n},b) - C.\label{eq.compare}
\end{equation}
In particular, we have
$$\lim_{n\rightarrow\infty} \frac{1}{n}\psi_{\xi}(x_{n}) = \lim_{n\rightarrow\infty}\frac{1}{n}d(x_{n},b).$$
\end{lem}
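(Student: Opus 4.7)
The plan is to establish the two inequalities of \eqref{eq.compare} separately and deduce the limit statement by dividing by $n$. Throughout, let $A_n := \sup_{\alpha\in\mathcal{S}}\tfrac{i(\xi,\alpha)}{i(x_n,\alpha)}$ and $M_\xi := \sup_{\alpha\in\mathcal{S}}\tfrac{i(\xi,\alpha)}{i(b,\alpha)}$, so by Condition \ref{cond.horo} we have $\psi_\xi(x_n)=\log(A_n/M_\xi)$. Because Condition \ref{cond.horo} is assumed to produce a finite-valued horofunction, $M_\xi$ is a finite positive constant depending only on $\xi$ and the chosen representative.

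For the upper bound $\psi_\xi(x_n)\le d(x_n,b)$, the key move is to replace each supremum by a carefully chosen term. Given $\delta>0$, pick $\beta_n\in\mathcal{S}$ with $\tfrac{i(\xi,\beta_n)}{i(x_n,\beta_n)}\ge A_n e^{-\delta}$; then bound the denominator $M_\xi$ from below by the single term $\tfrac{i(\xi,\beta_n)}{i(b,\beta_n)}$. The common factor $i(\xi,\beta_n)$ cancels, leaving
\[
\frac{A_n}{M_\xi}\le e^{\delta}\cdot\frac{i(b,\beta_n)}{i(x_n,\beta_n)}\le e^{\delta}\sup_{\alpha\in\mathcal{S}}\frac{i(b,\alpha)}{i(x_n,\alpha)}=e^{\delta}\,e^{d(x_n,b)}
\]
by Condition \ref{cond.dist}. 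Taking logs and letting $\delta\downarrow 0$ gives the desired inequality. (Note that because $d$ may be asymmetric, invoking the $1$-Lipschitz property of $\psi_\xi$ directly would not immediately yield the correct one-sided bound; working with the ratios avoids this issue.)

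For the lower bound $\psi_\xi(x_n)\ge d(x_n,b)-C$, the key move is to use $\alpha_n:=\alpha_{(x_n,b,\epsilon)}$ from Condition \ref{cond.seq} as a specific test element in $A_n$. By \eqref{eq.epsilon} we have $\tfrac{i(b,\alpha_n)}{i(x_n,\alpha_n)}\ge e^{d(x_n,b)-\epsilon}$, and by Condition \ref{cond.seq} we have $\tfrac{i(\xi,\alpha_n)}{i(b,\alpha_n)}\ge C'$. Factoring
\[
A_n\ge \frac{i(\xi,\alpha_n)}{i(x_n,\alpha_n)}=\frac{i(\xi,\alpha_n)}{i(b,\alpha_n)}\cdot\frac{i(b,\alpha_n)}{i(x_n,\alpha_n)}\ge C'\,e^{d(x_n,b)-\epsilon}
\]
and dividing by $M_\xi$, then taking logs, yields $\psi_\xi(x_n)\ge d(x_n,b)-\epsilon+\log(C'/M_\xi)$, so the claim holds with $C:=\epsilon+\log(M_\xi/C')$, which depends only on $\xi$, $\epsilon$, $C'$ (and implicitly on $b$ and the chosen representative), not on $n$.

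The final equality of limits is immediate: from \eqref{eq.compare}, $|\psi_\xi(x_n)/n-d(x_n,b)/n|\le C/n\to 0$, so one limit exists iff the other does, and they agree. I expect the only subtle point is the upper bound, where asymmetry of $d$ forces one to argue through the ratio formulas rather than symmetrically from the $1$-Lipschitz condition; the lower bound is the heart of the statement but is essentially an unpacking of Condition \ref{cond.seq} combined with the $\epsilon$-almost-maximizer property of $\alpha_n$.
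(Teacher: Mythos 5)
Your proof is correct and follows essentially the same route as the paper's: the lower bound, which is the substance of the lemma, is obtained exactly as in the paper by using $\alpha_{n}=\alpha_{(x_{n},b,\epsilon)}$ as a test element, factoring $\frac{i(\xi,\alpha_{n})}{i(x_{n},\alpha_{n})}=\frac{i(\xi,\alpha_{n})}{i(b,\alpha_{n})}\cdot\frac{i(b,\alpha_{n})}{i(x_{n},\alpha_{n})}$, and invoking (\ref{eq.epsilon}) together with Condition \ref{cond.seq}, yielding the same constant $C=\epsilon-\log C'+\log\sup_{\alpha}\frac{i(\xi,\alpha)}{i(b,\alpha)}$. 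The only divergence is the upper bound, which the paper gets in one line from the triangle inequality; your parenthetical worry about asymmetry there is unfounded, since $d(x_{n},z)\leq d(x_{n},b)+d(b,z)$ already gives $\psi_{z}(x_{n})\leq d(x_{n},b)$ for $z\in X$ and this passes to limits for $\xi\in\hb X$, but your ratio-based substitute is also valid.
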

\begin{proof}
First, by the triangle inequality we always have
$$\psi_{\xi}(y)\leq d(y,b).$$
We let 
$C'':=\log \sup_{\alpha\in\mathcal{S}} {i(\xi,\alpha)}/{i(b,\alpha)}$, 
which is independent of $n$.
Then by Condition \ref{cond.dist}, \ref{cond.horo}, and \ref{cond.seq},
\begin{align*}
\psi_{\xi}(x_{n}) &=  \log \sup_{\alpha\in\mathcal{S}} \frac{i(\xi,\alpha)}{i(x_{n},\alpha)}- C''\\
&\geq \log \frac{i(\xi,\alpha_{n})}{i(x_{n},\alpha_{n})} - C''\\
&= \log\frac{i(b,\alpha_{n})}{i(x_{n},\alpha_{n})} + \log\frac{i(\xi,\alpha_{n})}{i(b,\alpha_{n})}-C''\\
&\geq d(x_{n},b) - \epsilon + \log\frac{i(\xi,\alpha_{n})}{i(b,\alpha_{n})} - C''\\
&\geq d(x_{n},b) - \epsilon + \log C' -C''
\end{align*}
Set $C:=  \epsilon - \log C' +C''$.
As $\log C'\leq C''$ by definition, $C>0$.
Note that $C$ is independent of the choice of the representative $i(\xi,\cdot)\in\mathbb{R}_{\geq 0}^{\mathcal{S}}$.
Thus we get (\ref{eq.compare}).
The equality
$$\lim_{n\rightarrow\infty} \frac{1}{n}\psi_{\xi}(x_{n}) = \lim_{n\rightarrow\infty}\frac{1}{n}d(x_{n},b).$$
immediately follows from (\ref{eq.compare}).
\end{proof}
\subsection{Integral formula and continuity of drift}
We now consider (right) random walks on $G$.
Let $\mu$ be a probability measure on $G$ and $G^{\mathbb{Z}_{+}}$ the space of sample paths.
A cylinder is a subset of $G^{\mathbb{Z}_{+}}$ defined by 
$$[x_{1},\cdots, x_{n}] = \{\omega = (\omega_{i})_{i\in\mathbb{Z}_{+}}\in\G^{\mathbb{Z}_{+}}\mid \omega_{i} = x_{i}\text{ for } 1\leq i\leq n\}.$$
The probability measure $\mu$ induces a probability measure $\mathbb{P}$ on $G^{\mathbb{Z}_{+}}$, which is characterized by
$$\mathbb{P}([x_{1},\cdots,x_{n}]) = \mu(x_{1})\mu(x_{1}^{-1}x_{2})\cdots \mu(x_{n-1}^{-1}x_{n}).$$

Also let $\breve\mu$ denote the {\em reflected measure} of $\mu$ defined by $\breve\mu(g):=\mu(g^{-1})$ and 
$\breve{\mathbb{P}}$ the induced probability measure on $G^{\mathbb{Z}_{-}}$.
We let
\begin{equation}
L(\mu):=\sum_{g\in G}\mu(g)d(b,gb), \text{ and } \breve L(\mu):=\sum_{g\in G}\mu(g)d(gb,b).\label{eq.L}
\end{equation}
If $L(\mu)$ and $\breve L(\mu)$ are finite, then $\mu$ is said to have {\em finite first moment} with respect to the distance $d$.
By Kingman's subadditive ergodic theorem, if $\mu$ has finite first moment,
there exists $\ell(\mu)\geq 0$ such that for $\mathbb{P}$-a.e. $\omega = (\omega_{n})\in G^{\mathbb{Z}_{+}}$,
$$\lim_{n\rightarrow\infty}\frac{1}{n}d(\omega_{n}b,b) = \ell(\mu).$$
This $\ell(\mu)$ is called the {\em drift} of the random walk with transition probability $\mu$ with respect to the distance $d$.

We equip $\hb X$ the subset topology in $\Lip(X)$, and consider the Borel $\sigma$-algebra.
A measure $\nu$ on $\hb X$ is called {\em $\mu$-stationary}
if $$\nu(A)=\sum_{g\in G}\mu(g)\nu(g^{-1}A)$$
for every measurable subset $A\subset \hb X$.
Since $\hb X$ is compact, $\hb X$ always admits a $\mu$-stationary measure (see e.g. \cite[Lemma 2.2.1]{KM}).

First, we prove
\begin{lem}[c.f.{\cite[Proposition 2.2]{GMM}}]\label{lem.drift}
Let $\mu$ be a probability measure on $G$ with finite first moment.
Also let $\nu$ (resp. $\breve\nu$) be a $\mu$-stationary (resp. $\breve\mu$-stationary) measure on $\hb X$.
Suppose that for ${\mathbb{P}}$-a.e. $\omega = (\omega_{n})_{n\in\mathbb{Z}_{+}}\in G^{\mathbb{Z}_{+}}$ and $\breve\nu$-a.e. $\xi$,
the sequence $(\omega_{n}b)_{n\in\mathbb{Z}_{+}}$ and $\xi$ satisfy Condition \ref{cond.seq}.
Then 
$$\ell(\mu) = \int
c_{B}(g,\xi)d\breve\mu(g)d\breve\nu(\xi).$$

%

\end{lem}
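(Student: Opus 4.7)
The plan is to reduce $\ell(\mu)$ to a Birkhoff average of the Busemann cocycle via Lemma \ref{lem.compare}, and then identify its mean by running an ergodic theorem on a skew product over the shift on sample paths. Writing $\omega_n=g_1 g_2\cdots g_n$ with i.i.d.\ increments $g_k\sim\mu$, the identity from Lemma \ref{lem.action} rearranges to $\psi_\xi(gx)=\psi_{g^{-1}\xi}(x)+\psi_\xi(gb)$. Iterating in $k$ produces the telescoping
\[
\psi_\xi(\omega_n b)=\sum_{k=1}^{n}\psi_{\omega_{k-1}^{-1}\xi}(g_k b)=\sum_{k=1}^{n}c_B(g_k^{-1},\omega_{k-1}^{-1}\xi),
\]
where the second equality uses $\psi_\eta(gb)=c_B(g^{-1},\eta)$, immediate from the definition of $c_B$.

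Next, equip the increment sequence space $G^{\mathbb{Z}_+}$ with $\mu^{\otimes\mathbb{Z}_+}$ and its left shift $\sigma$, and introduce the skew product
\[
T(\tilde\omega,\xi):=(\sigma\tilde\omega,\,g_1^{-1}\xi).
\]
The $\breve\mu$-stationarity of $\breve\nu$ unpacks to $\int f(g^{-1}\xi)\,d\mu(g)\,d\breve\nu(\xi)=\int f\,d\breve\nu$, so together with the shift-invariance of $\mu^{\otimes\mathbb{Z}_+}$ it shows that $T$ preserves $\mu^{\otimes\mathbb{Z}_+}\times\breve\nu$. An induction gives $T^k(\tilde\omega,\xi)=(\sigma^k\tilde\omega,\omega_k^{-1}\xi)$, so the telescoping above is exactly a Birkhoff sum $\psi_\xi(\omega_n b)=\sum_{k=0}^{n-1}F\circ T^k$ for the observable $F(\tilde\omega,\xi):=\psi_\xi(g_1 b)$. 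Since $\psi_\xi$ is $1$-Lipschitz and vanishes at $b$, the bound $|F|\leq\max\{d(g_1 b,b),d(b,g_1 b)\}$ together with $L(\mu)+\breve L(\mu)<\infty$ gives $F\in L^1$.

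Birkhoff's ergodic theorem now yields $\frac{1}{n}\psi_\xi(\omega_n b)\to\mathbb{E}[F\mid\mathcal{I}_T]$ almost surely under $\mu^{\otimes\mathbb{Z}_+}\times\breve\nu$, where $\mathcal{I}_T$ is the $\sigma$-algebra of $T$-invariant sets. By hypothesis, Condition \ref{cond.seq} holds for $\mathbb{P}$-a.e.\ $\omega$ and $\breve\nu$-a.e.\ $\xi$, so Lemma \ref{lem.compare} also gives $\frac{1}{n}\psi_\xi(\omega_n b)\to\ell(\mu)$ on the same full-measure set. Consequently $\mathbb{E}[F\mid\mathcal{I}_T]=\ell(\mu)$ almost surely, and integrating over $G^{\mathbb{Z}_+}\times\hb X$ and substituting $g\mapsto g^{-1}$ produces
\[
\ell(\mu)=\int\psi_\xi(gb)\,d\mu(g)\,d\breve\nu(\xi)=\int c_B(g,\xi)\,d\breve\mu(g)\,d\breve\nu(\xi).
\]

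The step that I expect to require the most care is the bookkeeping of inverses in Step 2: in the asymmetric setting a sign error in the skew product or in the telescoping would yield the formula with $\nu$ in place of $\breve\nu$, or the drift $\ell(\breve\mu)$ in place of $\ell(\mu)$. Once those conventions are pinned down, the remainder is a routine Birkhoff argument, with Lemma \ref{lem.compare} doing the real work of identifying the cocycle average with the drift.
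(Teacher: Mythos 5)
Your argument is correct, and its skeleton coincides with the paper's: your telescoping of $\psi_\xi(\omega_n b)$ is exactly the cocycle identity $c_B(g_n\cdots g_1,\xi)=\sum_k c_B(g_k,g_{k-1}\cdots g_1\xi)$ unwound, and the $\breve\mu$-stationarity of $\breve\nu$ enters in the same way (the paper phrases it as ``each term of the sum has the same mean'', you phrase it as ``$T$ preserves $\mu^{\otimes\mathbb{Z}_+}\times\breve\nu$''). The one genuine difference is the final passage to the limit. The paper computes the mean of the sum exactly at each finite $n$, obtaining $\int c_B\,d\breve\mu\,d\breve\nu=\frac{1}{n}\int c_B(\omega_n^{-1},\xi)\,d\mathbb{P}(\omega)\,d\breve\nu(\xi)$, and then must interchange the $n\to\infty$ limit with the integral; it cites the bounded convergence theorem with the bound $|c_B(\omega_n^{-1},\xi)|/n\le\max\bigl(d(\omega_nb,b),d(b,\omega_nb)\bigr)/n$, which is not a single integrable dominating function, so strictly speaking a uniform-integrability remark is needed there (e.g.\ via $d(\omega_nb,b)\le\sum_{k=1}^{n}d(g_kb,b)$ and the $L^1$ law of large numbers, or the $L^1$ convergence in Kingman's theorem). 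Your route through Birkhoff's theorem and the tower property $\int F=\int\mathbb{E}[F\mid\mathcal{I}_T]$ requires no such interchange, so it closes that one loose step cleanly, at the modest cost of setting up the skew product. The inverse bookkeeping you flagged checks out: $\omega_n=g_1\cdots g_n$ with $g_k\sim\mu$ matches the paper's definition of $\mathbb{P}$, the invariance of the product measure under $T(\tilde\omega,\xi)=(\sigma\tilde\omega,g_1^{-1}\xi)$ is precisely $\breve\mu$-stationarity of $\breve\nu$, and the substitution $g\mapsto g^{-1}$ at the end lands on $\int c_B(g,\xi)\,d\breve\mu(g)\,d\breve\nu(\xi)$ as required.
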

\begin{proof}
The proof goes similarly to \cite[Proposition 2.2]{GMM}.
By the cocycle property,
\begin{align*}
&\int c_{B}(g_{n}\cdots g_{1},\xi)d\breve\mu(g_{1})\cdots d\breve\mu(g_{n})d\breve\nu(\xi)\\
= &\sum_{k=1}^{n} \int c_{B}(g_{k},g_{k-1} \cdots g_{1}\xi)d\breve\mu(g_{1})\cdots d\breve\mu(g_{k})d\breve\nu(\xi)
\end{align*}
Since the measure $\breve\nu$ is $\breve\mu$-stationary, the point $g_{k-1}\cdots g_{1}\xi$ is distributed according to $\breve\nu$.
Hence, the terms in the above sum do not depend on $k$. Thus we get
$$\int c_{B}(g,\xi)d\breve\mu(g)d\breve\nu(\xi) = \frac{1}{n} \int c_{B}(\omega_{n}^{-1},\xi)d\mathbb{P}(\omega)d\breve\nu(\xi).$$
By Lemma \ref{lem.compare} and our assumption, 
we have $$\lim_{n\rightarrow\infty} c_{B}(\omega_{n}^{-1},\xi)/n = \lim_{n\rightarrow\infty}\psi_{p(\xi)}(\omega_{n}b)/n= \ell(\mu)$$
for $\mathbb P$-a.e $\omega$ and $\breve\nu$-a.e. $\xi$.
Note that by the triangle inequality $|c_{B}(\omega_{n}^{-1},\xi)/n| = |\psi_{p(\xi)}(\omega_{n}b)/n|\leq \max(d(\omega_{n}b,b)/n, d(b, \omega_{n}b)/n)$.
Since $\mu$ has finite first moment, $d(\omega_{n}b,b)/n$ and $d(b,\omega_{n}b)/n$ are integrable.
By the bounded convergence theorem, the conclusion holds.
\end{proof}

The goal of this section is to prove the following.

\begin{thm}[c.f. {\cite[Proposition 2.3]{GMM}}]\label{thm.conti}
Let $X$ and $G$ be a metric space and a group as in the beginning of this section.
Suppose that $X$ satisfies Condition \ref{cond.dist} and \ref{cond.horo}.
Let $(\mu_{i})_{i\in\mathbb{N}}$ be a sequence of probability measures on $G$ with finite first moment, converging simply to a probability measure $\mu_{\infty}$ (i.e. $\mu_{i}(g)\rightarrow \mu_{\infty}(g)$ for all $g\in G$).
For each $i\in\mathbb{N}\cup\{\infty\}$, 
let $\mathbb{P}_{i}$ (resp. $\breve{\mathbb{P}_{i}}$) denote the probability measure on 
$G^{\mathbb{Z}_{+}}$ (resp. $G^{\mathbb{Z}_{-}}$) induced by $\mu_{i}$  (resp. $\breve\mu_{i}$).
Let further $\nu_{i}$ (resp. $\breve\nu_{i}$) be 
a $\mu_{i}$-stationary  (resp. $\breve\mu_{i}$-stationary) measure on $\hb X$.
We suppose that $\nu_{i}$ (resp. $\breve\nu_{i}$) converges weakly to $\nu_{\infty}$ (resp. $\breve\nu_{\infty}$).
Suppose further that 
for every $i\in\mathbb{N}\cup\{\infty\}$, 
$\breve{\mathbb{P}_{i}}$-a.e. $\omega=(\omega_{n})_{n\in\mathbb{Z}_{-}}$ and $\breve\nu_{i}$-a.e. $\xi$,
the sequence $(\omega_{n}b)_{n\in\mathbb{Z}_{-}}$ and $\xi$ satisfy Condition \ref{cond.seq}, 
and $\breve L(\mu_{i})\rightarrow \breve L(\mu_{\infty})$.


Then $\ell(\mu_{i}) \rightarrow \ell(\mu_{\infty})$. 
\end{thm}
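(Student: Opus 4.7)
The plan is to combine the integral formula of Lemma \ref{lem.drift} with a truncation argument, controlled by uniform integrability coming from the assumption $\breve L(\mu_i)\to\breve L(\mu_\infty)$. The hypotheses of the theorem ensure that Lemma \ref{lem.drift} applies to every $\mu_i$ and to $\mu_\infty$, giving
$$\ell(\mu_i)=\int_{G\times\hb X}c_B(g,\xi)\,d\breve\mu_i(g)\,d\breve\nu_i(\xi),$$
so it suffices to prove that the right-hand side converges to the corresponding quantity with $\breve\mu_\infty$ and $\breve\nu_\infty$.

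For each fixed $g\in G$, the map $\xi\mapsto c_B(g,\xi)=\psi_\xi(g^{-1}b)$ is continuous on the compact space $\overline{\psi(X)}$ (since evaluation at a point is continuous in the topology of uniform convergence on compact sets), and from the $1$-Lipschitz property together with $\psi_\xi(b)=0$ one gets $|c_B(g,\xi)|\leq d(g^{-1}b,b)$. Hence the assumed weak convergence $\breve\nu_i\to\breve\nu_\infty$ yields $\int c_B(g,\xi)\,d\breve\nu_i(\xi)\to\int c_B(g,\xi)\,d\breve\nu_\infty(\xi)$ for every fixed $g$. Combined with the simple convergence $\breve\mu_i(g)\to\breve\mu_\infty(g)$, this gives convergence of any finite partial sum over a set $F\subset G$.

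The key step is to make the remaining tail uniformly small in $i$. Using the bound just obtained,
$$\left|\int_{(G\setminus F)\times\hb X}c_B\,d\breve\mu_i\,d\breve\nu_i\right|\leq\sum_{g\notin F}\breve\mu_i(g)\,d(g^{-1}b,b)=\sum_{h\notin F^{-1}}\mu_i(h)\,d(hb,b).$$
Now $\mu_i(h)\,d(hb,b)\to\mu_\infty(h)\,d(hb,b)$ termwise, and by assumption the total sums $\sum_h \mu_i(h)d(hb,b)=\breve L(\mu_i)$ converge to $\breve L(\mu_\infty)$. A standard Scheff\'e-type argument then shows that for every $\epsilon>0$ there exists a finite $F\subset G$ for which the above tail is less than $\epsilon$ uniformly in $i\in\mathbb{N}\cup\{\infty\}$. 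Combining the convergence on $F$ with this uniform tail bound via the usual $\epsilon/3$-argument yields $\ell(\mu_i)\to\ell(\mu_\infty)$.

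The main obstacle is precisely this uniform tail control. Simple convergence of $\mu_i$ by itself cannot rule out mass, weighted by displacement, leaking to infinity along the group; it is exactly the moment hypothesis $\breve L(\mu_i)\to\breve L(\mu_\infty)$ that forces uniform integrability of $h\mapsto d(hb,b)$ against the family $\{\mu_i\}$ and allows the dominated-type passage to the limit under the $G$-integration. The role of weak convergence of the stationary measures is by contrast quite benign, since for each fixed $g$ the integrand is continuous and bounded on the compact space $\hb X$.
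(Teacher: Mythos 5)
Your proof is correct and follows essentially the same route as the paper: both derive the formula $\ell(\mu_i)=\sum_{g}\breve\mu_i(g)\int c_B(g,\xi)\,d\breve\nu_i(\xi)$ from Lemma \ref{lem.drift}, use weak convergence of $\breve\nu_i$ together with continuity and the bound $|c_B(g,\xi)|\leq d(g^{-1}b,b)$ for the inner integral, and invoke $\breve L(\mu_i)\to\breve L(\mu_\infty)$ to pass to the limit in the sum over $G$. The only difference is that you spell out the Scheff\'e-type uniform tail estimate, which the paper leaves implicit in the single clause ``since $\breve L(\mu_i)\to\breve L(\mu_\infty)$.''
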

\begin{proof}
The proof goes similarly to \cite[Proposition 2.3]{GMM}.
For every $g\in G$, $\int c_{B}(g,\xi)d\breve\nu_{i}(\xi)$ converges to $\int c_{B}(g,\xi)d\breve\nu_{\infty}(\xi)$ as 
$|c_{B}(g,\xi)|\leq d(g^{-1}b,b)=d(b,gb)$ and $c_{B}$ is continuous.
Then since $\breve L(\mu_{i})\rightarrow \breve L(\mu_{\infty})$, by Lemma \ref{lem.drift} we have
$$\ell(\mu_{i}) = 
\sum_{g\in G}\breve\mu_{i}(g)\int c_{B}(g,\xi)d\breve\nu_{i}(\xi)\rightarrow 
\sum_{g\in G}\breve\mu_{\infty}(g)\int c_{B}(g,\xi)d\breve\nu_{\infty}(\xi) 
= \ell(\mu_{\infty}).
$$
\end{proof}

\section{Application to the mapping class group}\label{sec.ex}
In this section, we apply Theorem \ref{thm.conti} to several distances related to the mapping class group $\mcg$.

\subsection{Thurston's Lipschitz distance}\label{sec.Thur}
Let $S$ be an orientable surface of finite type.
Let $g$ be the genus, and $p$ the number of punctures of $S$.
Throughout the paper, we suppose that $3g+p-3>0$.
Let $\mathcal{T}(S)$ denote the Teichm\"uller space of $S$.
In this subsection, we regard the Teichm\"uller space $\mathcal{T}(S)$ as the space of marked hyperbolic structures on $S$ up to certain equivalence relation.

Thurston's Lipschitz distance of two points $x,y$ in $\mathcal{T}(S)$ is defined as
$$d_{\mathrm{Th}}(x,y):=\inf_{\phi:x\rightarrow y} \log L(\phi), $$
where $\phi:x\rightarrow y$ is a homeomorphism with Lipschitz constant $L(\phi)$ and 
the infimum is taken over all homeomorphisms preserving the markings.
This is an asymmetric distance.
Let $\mathcal{S}$, $\mf$, and $\pmf$ denote the set of all isotopy classes of essential simple closed curves on $S$,
the space of measured foliations, and the space of projective measured foliations respectively.
By the work of Thurston \cite{Thur}, we have
\begin{equation}
d_{\mathrm{Th}}(x,y)=\log\sup_{\alpha\in\mathcal{S}}\frac{\mathrm{Len}_{y}(\alpha)}{\mathrm{Len}_{x}(\alpha)},\label{eq.Tdist}
\end{equation}
where for an isotopy class $c$ of simple closed curves, $\mathrm{Len}_{x}(c)$ denote the hyperbolic length of the shortest representative in $c$.

Let $\PRs$ denote the quotient $\mathbb{R}_{\geq 0}^{\mathcal{S}}/\mathbb{R}_{>0}$ with respect to the diagonal action.
Thurston compactified $\mathcal{T}(S)$ by embedding it into $\PRs$ by 
$$\mathcal{T}(S)\ni x\mapsto \mathrm{Len}_{x}(\cdot)\in \PRs.$$
Let $\overline{\mathcal{T}(S)}^{\mathrm{Th}}$ denote the closure of $\mathcal{T}(S)$ in $\PRs$.
Thurston showed that $\overline{\mathcal{T}(S)}^{\mathrm{Th}}\setminus \mathcal{T}(S)$
can be identified with $\pmf$, which contains $\mathcal{S}$ as a dense subset.
By letting $i(x,\cdot) = \mathrm{Len}_{x}(\cdot)$ if $x\in \mathcal{T}(S)$ and $i(x,\cdot)$ the intersection number if $x\in\pmf$,
we have a continuous function $\overline{\mathcal{T}(S)}^{\mathrm{Th}}\rightarrow \PRs$.
Hence the suprema in (\ref{eq.Tdist}) and (\ref{eq.Thoro}) in the below are attained in $\pmf$.
See e.g. \cite{FLP} for a detail.

Let $\psi_{z}^{\mathrm{Th}}$ denote the horofunction corresponding to $z\in\mathcal{T}(S)$ 
with respect to the distance $d_{\mathrm{Th}}$.
In \cite{Wal}, Walsh identified the horoboundary $\partial_{h}\mathcal{T}(S)$ with Thurston's boundary $\pmf$.
Furthermore, Walsh \cite{Wal} showed that for any $\xi\in\pmf$, the horofunction $\psi_{\xi}^{\mathrm{Th}}$ can be expressed as
\begin{equation}
\psi_{\xi}^{\mathrm{Th}}(x) = \log\left(\sup_{\eta\in\mathcal{S}}\frac{i(\xi,\eta)}{i(x,\eta)}\bigg/\sup_{\eta\in\mathcal{S}}\frac{i(\xi,\eta)}{i(b,\eta)}\right),\label{eq.Thoro}
\end{equation}
for $x\in \mathcal{T}(S)$.
Note that the right-hand side of (\ref{eq.Thoro}) is independent of the choice of representatives of $\xi$ in $\mf$.

As $i|_{\mathcal{T}(S)\times\mathcal{S}}$ is defined by $\mathrm{Len}(\cdot)$, equations (\ref{eq.Tdist}) and (\ref{eq.Thoro}) show that  Thurston's asymmetric  Lipschitz distance satisfies Condition \ref{cond.dist} and \ref{cond.horo}.
Therefore our goal in this subsection is to confirm Condition \ref{cond.seq} almost surely.

We may identify $\pmf$ with $\mathcal{P}:=\{F\in\mf\mid i(b,F)=1\}$.
Then we have the following.
\begin{lem}\label{lem.Tseq}
Let $x = (x_{n})_{n\in\mathbb{Z}_{+}}$ be a sequence in $\mathcal{T}(S)$ which converges to $F(x)\in \pmf$ as $n\rightarrow\infty$ in Thurston's compactification.
If $\xi\in\mf$ satisfies that there exists $C>0$ such that $i(\xi,\eta)\geq C>0$ for all $\eta\in\mathcal{P}$ with $i(F(x),\eta)=0$,
then Condition \ref{cond.seq} holds for $x = (x_{n})_{n\in\mathbb{Z}_{+}}$ and $\xi$.
\end{lem}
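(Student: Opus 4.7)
The plan is to argue by contradiction, reducing Condition \ref{cond.seq} to a question about accumulation points of near-minimizers in Thurston's compactification. Fix any $\epsilon > 0$, set $\alpha_n := \alpha_{(x_n, b, \epsilon)} \in \mathcal{S}$, and normalize to $\hat\alpha_n := \alpha_n / i(b, \alpha_n) \in \mathcal{P}$; by homogeneity of intersection numbers, the desired inequality in Condition \ref{cond.seq} is exactly $i(\xi, \hat\alpha_n) \geq C'$. Suppose this fails for every $C' > 0$. Then along some subsequence $i(\xi, \hat\alpha_n) \to 0$, and by compactness of $\mathcal{P}$ a further subsequence converges to some $\eta_\infty \in \mathcal{P}$; continuity of the intersection pairing on $\mf \times \mf$ forces $i(\xi, \eta_\infty) = 0$.

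The heart of the argument is then to show that this must force $i(F(x), \eta_\infty) = 0$, which contradicts the hypothesis of the lemma. The near-maximizing property of $\alpha_n$ for \eqref{eq.Tdist}, combined with density of $\mathcal{S}$ in $\pmf$ and continuity of $i(x_n, \cdot)$ on $\mf$, translates into the statement that $\hat\alpha_n$ nearly minimizes $i(x_n, \cdot)$ over all of $\mathcal{P}$: for every $\eta \in \mathcal{P}$,
\[
i(x_n, \hat\alpha_n) \;\leq\; e^\epsilon\, i(x_n, \eta).
\]
Since $x_n \to F(x)$ in Thurston's compactification, there exist scalars $\lambda_n \to 0$ with $\lambda_n\, i(x_n, \cdot) \to i(F(x), \cdot)$ uniformly on the compact set $\mathcal{P}$. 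Taking $\eta = F(x)$ and multiplying by $\lambda_n$, the right-hand side tends to $e^\epsilon\, i(F(x), F(x)) = 0$, using the standard fact that measured foliations have vanishing self-intersection. On the other hand, uniform convergence on $\mathcal{P}$ together with $\hat\alpha_n \to \eta_\infty$ identifies the left-hand-side limit as $i(F(x), \eta_\infty)$, so $i(F(x), \eta_\infty) = 0$, and the hypothesis of the lemma then gives $i(\xi, \eta_\infty) \geq C > 0$, contradicting $i(\xi, \eta_\infty) = 0$.

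The main obstacle, while not deep, is justifying the passage from near-maximization over the countable set $\mathcal{S}$ to near-minimization over the compact projective space $\mathcal{P}$; this rests on the continuity and projective homogeneity of the extended intersection pairing together with density of $\mathcal{S}$ in $\pmf$, both standard in Thurston's theory. A smaller subtlety is evaluating the uniform convergence $\lambda_n\, i(x_n, \cdot) \to i(F(x), \cdot)$ at the moving point $\hat\alpha_n \to \eta_\infty$, which follows immediately from compactness of $\mathcal{P}$.
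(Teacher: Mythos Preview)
Your argument is correct and follows the same route as the paper: pass to a subsequential limit $\eta_\infty\in\mathcal{P}$ of the (normalized) maximizers, use the convergence $\lambda_n x_n\to F(x)$ together with joint continuity of the pairing to force $i(F(x),\eta_\infty)=0$, and then invoke the hypothesis on $\xi$. The only cosmetic differences are that the paper works directly with exact maximizers in $\mathcal{P}$ (invoking density of $\mathcal{S}\subset\mathcal{P}$ at the end rather than normalizing $\epsilon$-near-maximizers from the start) and deduces $i(F(x),\alpha_\infty)=0$ from $d_{\mathrm{Th}}(x_n,b)\to\infty$ rather than from your comparison with $\eta=F(x)$ and $i(F(x),F(x))=0$.
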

\begin{proof}
Let $\alpha_{n}\in \mathcal{P}$ be an element which satisfies $d_{\mathrm{Th}}(x_{n},b)=\log (1/i(x_{n},\alpha_{n}))$.
By taking subsequence if necessary, we may suppose $\alpha_{n}\rightarrow\alpha_{\infty}\in\mathcal{P}$ ($\mathcal{P}$ is compact).
We take the representative of $F(\omega)$ in $\mathcal{P}$.
As $x_{n}$ converges to $F(x)\in\mathcal{P}$ in Thurston's compactification, there exist $\lambda_{n}$ such that
$\lambda_{n}\rightarrow 0$ and $\lambda_{n}x_{n}\rightarrow F(x)$ in $\mathbb{R}_{\geq 0}^{\mathcal{S}}$.
Since $$d_{\mathrm{Th}}(x_{n},b) = \log\frac{\lambda_{n}}{i(\lambda_{n}x_{n},\alpha_{n})}\rightarrow \infty,$$
$i(\lambda_{n}x_{n},\alpha_{n})$  must go to $0$.
Hence we have $i(F(x),\alpha_{\infty})=0$ and so by our assumption,
$i(\xi,\alpha_{\infty})\geq C>0$.
Then the result follows from the continuity of $i(\cdot,\cdot)$ and the density of $\mathcal{S}\subset\mathcal{P}$.
\end{proof}

Let $\mcg$ denote the mapping class group, namely 
the group of all isotopy classes of orientation preserving homeomorphisms on $S$.
We now consider random walks on $\mcg$.
As we assumed $3g+p-3>0$, $\mcg$ contains pseudo-Anosov elements.
Two pseudo-Anosov elements are called {\em independent} if their fixed point sets in $\pmf$ are disjoint.
A probability measure $\mu$ on $\mcg$ is called {\em non-elementary}
if the group generated by the support of $\mu$ contains two independent pseudo-Anosovs.
Let $\mathbb{P}$ denote the induced probability measure on $\mcg^{\mathbb{Z}_{+}}$.
Let us recall the work of Kaimanovich-Masur.
Let $\mathcal{UE}(S)\subset \mathcal{PMF}(S)$ denote the space of uniquely ergodic foliations.
For later use (in Appendix), we also mention the notion of $\mu$-boundary, see \cite{Kai, KM} for the definition and properties.
\begin{thm}[{\cite[Theorem 2.2.4]{KM}}]\label{thm.KM}
Let $\mu$ be a non-elementary probability measure on $\mcg$.
 Then the following holds.
\begin{enumerate}
 \item  There exists a unique $\mu$-stationary probability measure $\nu$
	on $\mathcal{PMF}(S)$ 
	which is purely non-atomic and concentrated on $\mathcal{UE}(S)$, and the measure space $(\mathcal{UE}(S), \nu)$ is a $\mu$-boundary.
\item  For $\mathbb{P}$-a.e. $\omega\in\mathrm{MCG}(S)^{\mathbb{Z}_{+}}$ and any $x\in\mathcal{T}(S)$,
       the sequence $\omega_nx$ converges in $\mathcal{PMF}(S)$ to a limit 
       $F(\omega)\in\mathcal{UE}(S)$ 
       and the distribution of the limits is given by the measure $\nu$.
\end{enumerate}
\end{thm}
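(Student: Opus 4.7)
The plan is to follow Furstenberg's general framework for boundary theory of random walks, adapted to the $\mcg$-action on Thurston's compactification $\overline{\mathcal{T}(S)}^{\mathrm{Th}}$. First, I would establish existence of a $\mu$-stationary measure on $\pmf$: since $\pmf$ is compact and the $\mcg$-action on it is continuous, the operator $\nu\mapsto \mu*\nu$ on $\mathrm{Prob}(\pmf)$ is weak-$*$ continuous, so a Markov--Kakutani (or Schauder) fixed-point argument produces a stationary $\nu$.

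The next step is to show that \emph{any} $\mu$-stationary $\nu$ on $\pmf$ is purely non-atomic and supported on $\mathcal{UE}(S)$. For non-atomicity, I would argue by contradiction: the set of atoms of maximal $\nu$-mass is finite, hence invariant under the semigroup generated by $\mathrm{supp}(\mu)$; but non-elementarity supplies two independent pseudo-Anosov elements whose disjoint North--South dynamics on $\pmf$ preclude any non-empty finite invariant subset. For concentration on $\mathcal{UE}(S)$, I would invoke Masur's criterion together with recurrence properties of the random walk: non-elementarity forces positive drift into the thick part of moduli space along the Teichmüller metric, so the set of non-uniquely-ergodic foliations becomes $\nu$-negligible.

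For the convergence (2), I would apply the martingale convergence theorem to the sequence $\omega_n \nu$ in $\mathrm{Prob}(\pmf)$: weak-$*$ compactness together with the martingale structure produces a weak limit $\nu_\omega$ $\mathbb{P}$-a.s. The contracting dynamics of products of pseudo-Anosov-like elements accumulated along a typical sample path, combined with the non-atomicity established above, forces $\nu_\omega$ to be a Dirac mass $\delta_{F(\omega)}$ with $F(\omega)\in\mathcal{UE}(S)$, and stationarity identifies the law of $F(\omega)$ as $\nu$. Upgrading "$\omega_n\nu\to\delta_{F(\omega)}$" to "$\omega_n x\to F(\omega)$ for every $x\in\mathcal{T}(S)$" is a contraction argument: high powers of pseudo-Anosov-like words send any compact set in $\mathcal{T}(S)\cup\pmf$ (minus a neighborhood of the repelling fixed point) into a small neighborhood of their attractor. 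Uniqueness of $\nu$ is then automatic, since any stationary measure must coincide with the distribution of $F(\omega)$; and the $\mu$-boundary property of $(\mathcal{UE}(S),\nu)$ follows from Furstenberg's standard criterion applied to this convergence.

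The main obstacle is the assertion that $F(\omega)\in\mathcal{UE}(S)$. The soft compactness and convexity steps give existence, non-atomicity, uniqueness, and the boundary property in a formal way once one knows the limit is uniquely ergodic; but establishing unique ergodicity requires genuine Teichmüller-geometric input, essentially Masur's criterion (recurrence of the vertical foliation's Teichmüller geodesic to the thick part of moduli space) combined with positivity of drift for the random walk. Everything else in the proof cascades from this single hard ingredient together with the existence of two independent pseudo-Anosov elements in the group generated by $\mathrm{supp}(\mu)$.
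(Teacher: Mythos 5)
This statement is not proved in the paper at all: it is quoted verbatim from Kaimanovich--Masur \cite[Theorem 2.2.4]{KM} and used as a black box, so there is no internal argument to compare yours against. Measured against the actual proof in \cite{KM}, your outline correctly identifies the Furstenberg-style skeleton (existence of a stationary measure by compactness, non-atomicity via the finite set of maximal atoms and two independent pseudo-Anosovs, martingale convergence of $\omega_n\nu$, and the contraction step that turns the limit into a Dirac mass), and your closing paragraph rightly isolates concentration on $\mathcal{UE}(S)$ as the hard geometric input.

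The genuine gap is precisely there, and your proposed justification for it does not work as stated. You argue that ``non-elementarity forces positive drift into the thick part of moduli space along the Teichm\"uller metric, so the set of non-uniquely-ergodic foliations becomes $\nu$-negligible.'' Positive drift and recurrence of sample paths to the thick part are consequences that are normally derived \emph{from} the boundary convergence theorem (indeed, this very paper's drift formula in Lemma \ref{lem.drift} rests on Theorem \ref{thm.KM}), so invoking them here is circular. In \cite{KM} the order is reversed: one first shows, for an arbitrary stationary measure $\nu$, that $\nu$ gives no mass to non-minimal foliations (via the equivariant assignment of a canonical finite curve system, reducing to a stationary measure on a countable $\mcg$-set, which non-elementarity excludes) and then no mass to minimal non-uniquely-ergodic foliations (via the equivariant finite set of ergodic components and non-atomicity); only afterwards does one use unique ergodicity of the limit to upgrade $\omega_n\nu\to\delta_{F(\omega)}$ to convergence of every orbit $\omega_n x$, which is exactly where Masur's unique-ergodicity criterion enters. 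Your sketch also leaves this last upgrade as an assertion about ``pseudo-Anosov-like words,'' whereas the correct mechanism is the deterministic fact that convergence to a uniquely ergodic point in Thurston's compactification is independent of the orbit point. As written, the proposal is a roadmap rather than a proof, and its one concrete suggestion for the key step would have to be replaced.
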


As a corollary of Lemma \ref{lem.Tseq} and Theorem \ref{thm.KM}, we have the following.
\begin{coro}\label{cor.Tseq}
For $\mathbb{P}$-a.e. $\omega = (\omega_{n})_{n\in\mathbb{Z}_{+}}\in\mcg^{\mathbb{Z}_{+}}$ and 
$\breve \nu$-a.e $\xi$, $(\omega_{n}b)_{n\in\mathbb{Z}_{+}}$ and $\xi$ satisfy
Condition \ref{cond.seq}.
\end{coro}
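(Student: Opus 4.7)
The plan is to combine Lemma \ref{lem.Tseq} with the Kaimanovich--Masur convergence theorem via a Fubini argument. First, I would apply Theorem \ref{thm.KM} to both $\mu$ and $\breve\mu$. Since $\mu$ is non-elementary, so is $\breve\mu$: the group generated by $\mathrm{supp}(\breve\mu) = (\mathrm{supp}(\mu))^{-1}$ agrees with the one generated by $\mathrm{supp}(\mu)$, and the inverses of two independent pseudo-Anosovs are still independent pseudo-Anosovs. Hence Theorem \ref{thm.KM} provides non-atomic $\mu$- and $\breve\mu$-stationary measures $\nu$ and $\breve\nu$ on $\mathcal{PMF}(S)$, each concentrated on $\mathcal{UE}(S)$. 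Moreover, for $\mathbb{P}$-a.e.\ $\omega$, the orbit $\omega_n b$ converges in Thurston's compactification to a limit $F(\omega) \in \mathcal{UE}(S)$ whose distribution under $\mathbb{P}$ is $\nu$. This places us in the setting needed to apply Lemma \ref{lem.Tseq}.

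Next, I would use unique ergodicity to simplify the hypothesis of Lemma \ref{lem.Tseq}. When $F(\omega) \in \mathcal{UE}(S)$, any $\eta \in \mathcal{P}$ with $i(F(\omega),\eta)=0$ must share its underlying lamination with $F(\omega)$; unique ergodicity then forces $\eta$ to coincide projectively with $F(\omega)$. Thus the set $\{\eta \in \mathcal{P} : i(F(\omega),\eta)=0\}$ reduces to the singleton $\{F(\omega)\}$, and the hypothesis of Lemma \ref{lem.Tseq} collapses to the single inequality $i(\xi, F(\omega)) > 0$, with the admissible constant $C := i(\xi, F(\omega))$.

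Finally, I would verify $i(\xi, F(\omega)) > 0$ for $\mathbb{P} \times \breve\nu$-a.e.\ pair $(\omega, \xi)$ by Fubini. For each fixed $\xi \in \mathcal{UE}(S)$, the event $\{\omega : F(\omega) = \xi\}$ has $\mathbb{P}$-probability $\nu(\{\xi\}) = 0$ because $\nu$ is non-atomic. Since $\breve\nu$ is supported on $\mathcal{UE}(S)$, $\breve\nu$-a.e.\ $\xi$ is uniquely ergodic, and any two distinct uniquely ergodic projective classes have strictly positive intersection — otherwise each would be a transverse measure for the other's lamination, contradicting unique ergodicity. Fubini then delivers the desired almost-sure positivity, and Lemma \ref{lem.Tseq} yields Condition \ref{cond.seq}. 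The argument is short: the substantive step is the collapse of Lemma \ref{lem.Tseq}'s hypothesis to a single inequality via unique ergodicity, and once that is in hand the only point to watch is that the map $F$ from sample paths to $\mathcal{PMF}(S)$ is measurable (it is a pointwise limit of continuous maps) so that Fubini applies; there is no genuine obstacle beyond this.
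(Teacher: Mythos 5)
Your argument is correct and follows essentially the same route as the paper: by Theorem \ref{thm.KM} one may assume $F(\omega)$ and $\xi$ are uniquely ergodic, which collapses the hypothesis of Lemma \ref{lem.Tseq} to the single condition $i(\xi,F(\omega))>0$, and this holds almost surely by non-atomicity of the stationary measure. The paper's proof is a condensed version of yours; the extra details you supply (non-elementarity of $\breve\mu$, the Fubini argument, measurability of $F$) are all correct and worth recording.
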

\begin{proof}
By Theorem \ref{thm.KM}, we may suppose that $\xi$ and $F(\omega)$ are uniquely ergodic.
Then for $\eta\in\{\xi,F(\omega)\}$, we have that $i(\alpha,\eta) = 0$ implies $\alpha = \eta$ in $\pmf$ for any $\alpha\in\mf$.
Furthermore, as $\breve\nu$ is non-atomic, we may suppose that $i(F(\omega),\xi)=C>0$.
Hence the assumption of Lemma \ref{lem.Tseq} is satisfied for $(\omega_{n}b)_{n\in\mathbb{Z}_{+}}$ and $\xi$.
\end{proof}

We now define $c_{B}^{\mathrm{Th}}:\mcg\times(\mathcal{T}(S)\cup\pmf)\rightarrow\mathbb{R}$ by
 $$c_{B}^{\mathrm{Th}}(g,\xi):=\psi_{\xi}^{\mathrm{Th}}(g^{-1}b).$$
Also let $\ell_{\mathrm{Th}}(\mu)$ denote the drift of the random walk given by $\mu$ with respect to Thurston's Lipschitz distance and define $L_\mathrm{Th}(\mu)$ and $\breve L_\mathrm{Th}(\mu)$ using (\ref{eq.L}).
Then by Lemma \ref{lem.drift} and Corollary \ref{cor.Tseq} we have
\begin{prop}[c.f.{\cite[Proposition 2.2]{GMM}}]\label{prop.Tdrift}
Let $\mu$ be a non-elementary probability measure on $\mcg$ and $\breve\nu$ be a $\breve\mu$-stationary measure on $\pmf$.
Then 
$$\ell_{\mathrm{Th}}(\mu) = \int_{\mcg\times\pmf}c_{B}^{\mathrm{Th}}(g,\xi)d\breve\mu(g)d\breve\nu(\xi).$$
\end{prop}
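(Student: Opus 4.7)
The plan is to deduce this proposition as a direct application of Lemma \ref{lem.drift} to Thurston's Lipschitz distance, so the work reduces to checking that all of its hypotheses are in force in this setting. Since $\ell_{\mathrm{Th}}(\mu)$ is only defined when $\mu$ has finite first moment, that assumption is implicit; the remaining ingredients are Conditions \ref{cond.dist}, \ref{cond.horo}, and the almost-sure Condition \ref{cond.seq}.

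First I would record that Condition \ref{cond.dist} holds by Thurston's formula (\ref{eq.Tdist}) with $i(x,\cdot) = \mathrm{Len}_x(\cdot)$, and that Condition \ref{cond.horo} holds by Walsh's formula (\ref{eq.Thoro}) once $i(\cdot,\cdot)$ is extended to $\pmf$ via geometric intersection number. Both are already spelled out in the preceding subsection, and the continuity and density statements for $\mathcal{S} \subset \pmf$ ensure that the suprema behave well under the identifications made in Section \ref{sec.conti}. Next I would invoke Corollary \ref{cor.Tseq}, which says that for $\mathbb{P}$-a.e.\ sample path $\omega$ and $\breve\nu$-a.e.\ $\xi$, the orbit $(\omega_n b)_{n\in\mathbb{Z}_+}$ together with $\xi$ satisfies Condition \ref{cond.seq}. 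This is the only nontrivial verification, and it rests on the Kaimanovich--Masur theorem (Theorem \ref{thm.KM}) guaranteeing that $\breve\nu$ is non-atomic and concentrated on $\mathcal{UE}(S)$, so that $F(\omega)$ and $\xi$ are almost surely distinct uniquely ergodic foliations with strictly positive mutual intersection.

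With the three conditions confirmed, Lemma \ref{lem.drift} delivers exactly the formula $\ell_{\mathrm{Th}}(\mu) = \int c_B^{\mathrm{Th}}(g,\xi)\, d\breve\mu(g)\, d\breve\nu(\xi)$, once one notes that $c_B^{\mathrm{Th}}$ as defined just above the proposition coincides with the abstract Busemann cocycle of Section \ref{sec.horo} under the identification of $\hb \mathcal{T}(S)$ with $\pmf$ from Walsh's theorem. I do not expect any serious obstacle: the substantive content has already been carried out in Corollary \ref{cor.Tseq}, and the proposition is essentially the Teichmüller-Thurston specialization of the general integral formula.
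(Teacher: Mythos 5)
Your proposal is correct and follows exactly the paper's route: the proposition is obtained by applying Lemma \ref{lem.drift}, with Conditions \ref{cond.dist} and \ref{cond.horo} supplied by Thurston's and Walsh's formulas and Condition \ref{cond.seq} verified almost surely via Corollary \ref{cor.Tseq}. No discrepancy with the paper's argument.
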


%

Putting all the discussion in this subsection together, we have the continuity of the drift.

\begin{thm}\label{thm.conti-Thurston}
Let $(\mu_{i})_{i\in\mathbb{N}}$ be a sequence of non-elementary probability measures on $\mcg$, each of which has finite first moment with respect to Thurston's Lipschitz distance on $\mathcal{T}(S)$.
Suppose that $(\mu_{i})_{i\in\mathbb{N}}$ converges simply to a non-elementary
probability measure $\mu_{\infty}$ (i.e. $\mu_{i}(g)\rightarrow \mu_{\infty}(g)$ for all $g\in \mcg$).
Suppose further that $\breve L_\mathrm{Th}(\mu_{i})\rightarrow\breve L_\mathrm{Th}(\mu_{\infty})$.
Then $\ell_{\mathrm{Th}}(\mu_{i}) \rightarrow \ell_{\mathrm{Th}}(\mu_{\infty})$. 
\end{thm}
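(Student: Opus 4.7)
The plan is to deduce this from the general continuity Theorem \ref{thm.conti}, applied to $X = \mathcal{T}(S)$ equipped with $d_{\mathrm{Th}}$, and $G = \mcg$. Conditions \ref{cond.dist} and \ref{cond.horo} for this pair are already verified in Section \ref{sec.Thur} via (\ref{eq.Tdist}) and (\ref{eq.Thoro}). The finite first moment, simple convergence $\mu_i\to\mu_\infty$, non-elementarity of both $\mu_i$ and $\mu_\infty$, and the convergence $\breve L_{\mathrm{Th}}(\mu_i)\to\breve L_{\mathrm{Th}}(\mu_\infty)$ are part of the hypothesis. Two additional inputs must be supplied to invoke Theorem \ref{thm.conti}: the weak convergence of stationary measures, and the almost-sure validity of Condition \ref{cond.seq} for each $i$.

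For the almost-sure validity of Condition \ref{cond.seq}, I would apply Corollary \ref{cor.Tseq} separately to each $\breve\mu_i$. Since $\mu_i$ is non-elementary, so is $\breve\mu_i$ (the supports generate the same subgroup of $\mcg$), and thus Theorem \ref{thm.KM} applies to $\breve\mu_i$, so the proof of Corollary \ref{cor.Tseq} (which uses only the unique ergodicity of the limit foliation and non-atomicity of the stationary measure) goes through verbatim for every $i\in\mathbb{N}\cup\{\infty\}$.

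The substantive step is the weak convergence $\breve\nu_i \to \breve\nu_\infty$. By Theorem \ref{thm.KM}, the $\breve\mu_i$-stationary probability measure $\breve\nu_i$ on $\pmf$ is unique. Since $\pmf$ is compact, the space of Borel probability measures on $\pmf$ is weak-$*$ compact, so any subsequence of $(\breve\nu_i)$ has a further weakly convergent subsequence; call its limit $\breve\nu_*$. I would show $\breve\nu_*$ is $\breve\mu_\infty$-stationary: for any bounded continuous $f:\pmf\to\mathbb{R}$, start from the stationarity relation
\begin{equation*}
\int f\,d\breve\nu_i = \sum_{g\in\mcg}\breve\mu_i(g)\int f(g^{-1}\xi)\,d\breve\nu_i(\xi),
\end{equation*}
and pass to the limit along the subsequence. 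The left side tends to $\int f\,d\breve\nu_*$. On the right, each term converges by simple convergence of $\breve\mu_i$ and weak convergence of $\breve\nu_i$, and the countable sum passes to the limit by a standard truncation/Scheff\'e argument: given $\varepsilon>0$, choose a finite $F\subset\mcg$ with $\sum_{g\notin F}\breve\mu_\infty(g)<\varepsilon$, so that $\sum_{g\notin F}\breve\mu_i(g)<2\varepsilon$ for large $i$, bounding the tail by $2\|f\|_\infty\cdot 2\varepsilon$, while the finite sum over $F$ converges termwise. By the uniqueness in Theorem \ref{thm.KM}, $\breve\nu_*=\breve\nu_\infty$, so the full sequence $\breve\nu_i\to\breve\nu_\infty$ weakly.

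With these ingredients, Theorem \ref{thm.conti} applies and yields $\ell_{\mathrm{Th}}(\mu_i)\to\ell_{\mathrm{Th}}(\mu_\infty)$. The main obstacle I anticipate is the weak convergence of stationary measures; all other hypotheses either reduce to things already proven in Section \ref{sec.Thur} or are built into the statement. The uniqueness half of the Kaimanovich--Masur theorem is essential here, since without it a subsequential limit need not be identifiable with $\breve\nu_\infty$.
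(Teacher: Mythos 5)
Your proposal is correct and follows essentially the same route as the paper: invoke Theorem \ref{thm.conti} after verifying Condition \ref{cond.seq} via Corollary \ref{cor.Tseq} and obtaining weak convergence $\breve\nu_i\to\breve\nu_\infty$ by compactness of the space of probability measures on $\pmf$, stationarity of any subsequential limit, and the uniqueness in Theorem \ref{thm.KM}. Your truncation argument for passing the stationarity relation to the limit simply fills in a step the paper leaves implicit.
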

\begin{proof}
By Theorem \ref{thm.KM}, for each $i\in\mathbb{N}\cup\{\infty\}$, there is a unique $\breve\mu_{i}$-stationary measure $\nu_{i}$ on $\pmf$.
As the action of $\mcg$ on $\pmf$ is continuous, after taking subsequence, $\nu_{i}$ converges weakly to a $\mu_{\infty}$-stationary measure $\nu'$, and the uniqueness implies that $\nu'=\nu_{\infty}$.
Note that we have already seen that Condition \ref{cond.dist} and \ref{cond.horo} are satisfied for Thurston's Lipschitz distance.
By Corollary \ref{cor.Tseq}, we see that the assumptions of Theorem \ref{thm.conti} are all satisfied and the result follows.
\end{proof}

\subsection{ Teichm\"uller distance on $\mathcal{T}(S)$}\label{sec.Teich}
In this subsection, we consider the Teichm\"uller distance.
First, note that by the work of Choi-Rafi \cite{CR}, 
the Teichm\"uller distance and Thurston's Lipschitz distance differ at most by a constant amount in the thick part of $\mathcal{T}(S)$.
This fact implies that the drifts with respect to the Teichm\"uller distance and Thurston's Lipschitz distance coincide (see e.g. \cite{Mas}).
Hence the continuity of the drift with respect to the Teichm\"uller distance follows immediately from Theorem \ref{thm.conti-Thurston}.
However, our strategy in Section 3 also works for the Teichm\"uller distance and combining with the work of Choi-Rafi, 
we get an interesting stochastic relationship between the hyperbolic length and the extremal length.
Hence we prove the continuity of drift with respect to the Teichm\"uller distance without using Theorem \ref{thm.conti-Thurston}.
In this subsection, we regard the Teichm\"uller space $\mathcal{T}(S)$ as the space of equivalence classes of marked Riemann surfaces.
The Teichm\"uller distance is defined by
$$d_{\mathcal{T}}(x,y) = \frac{1}{2}\inf_{\phi:x\rightarrow y}\log K(\phi)$$
where $K(\phi)$ is the dilatation of a quasi-conformal map $\phi$ and the infimum is taken over all quasi-conformal maps $\phi:x\rightarrow y$ compatible with the markings.

We first recall the notion of extremal length.
\begin{defi}[Extremal length]
Let $x\in\mathcal{T}(S)$ and $\alpha\in\mathcal{S}$, the extremal length of $\alpha$ on $x$ is
$$\ext_{x}(\alpha):=\sup_{\sigma}\frac{L_{\sigma}^{2}(\alpha)}{A(\sigma)},$$
where the supremum is taken over all conformal metrics $\sigma(z)|dz|$ in $x$, $L_{\sigma}$ is the length function 
$$L_{\sigma}(\alpha):=\inf_{a\in\alpha}\int_{a}\sigma(z)|dz|,$$
and $A(\sigma)$ is the area
$$A(\sigma):=\int_{x} \sigma^{2}(z)|dz|^{2}.$$
\end{defi}

A key ingredient in this subsection is Kerckhoff's formula of the Teichm\"uller distance (see \cite{Ker}):
\begin{equation}\label{eq.tei-dist}
d_{\mathcal{T}}(x,y)=\frac{1}{2}\log\sup_{\alpha\in\mathcal{S}}\frac{\ext_{y}(\alpha)}{\ext_{x}(\alpha)}.
\end{equation}

Gardiner-Masur \cite{GM} showed that 
$$\mathcal{T}(S)\ni x\mapsto \ext_{x}(\cdot)^{1/2}\in \PRs$$
is an embedding with compact closure.
This is called the Gardiner-Masur compactification and we denote it by $\overline{\mathcal{T}(S)}^{GM}$.
The boundary $\partial_{GM}\mathcal{T}(S)$ of $\overline{\mathcal{T}(S)}^{GM}$ is called the Gardiner-Masur boundary.


For $x\in\mathcal{T}(S)$, let $K_{x}$ be a quantity given by
$d_{\mathcal{T}}(x, b) = \frac{1}{2}\log K_{x}$.
Miyachi \cite{Miy} introduced a function:
$$\mathcal{E}_{x}(\alpha) = \frac{\ext_{x}(\alpha)^{1/2}}{K_{x}^{1/2}}$$
for $x\in\mathcal{T}(S)$ and $\alpha\in\mf$.
Miyachi showed that the function $\mathcal{E}$ extends continuously to the Gardiner-Masur boundary.
\begin{lem}[\cite{Miy}]\label{lem.Miyachi}
For any $P\in\partial_{\mathrm{GM}}\mathcal{T}(S)$,
there is a non-negative continuous function $\mathcal{E}_{P} (\cdot)$ defined on $\mf$, such that
\begin{enumerate}
\item $\mathcal{E}_{P}(t\alpha) = t\mathcal{E}_{P}(\alpha)$ for $t>0$ and $\alpha\in\mf$, and
\item $\mathcal{E}_{P}:\mathcal{S}\rightarrow \mathbb{R}_{+}$ represents $P$ 
as an element of  $\PRs$.
\item If a sequence $(x_{n})_{n\in\mathbb{Z}_{+}}$ in $\mathcal{T}(S)$ converges to a point $P\in\partial_{\mathrm{GM}}\mathcal{T}(S)$, then after taking a subsequence if necessary, $\mathcal{E}_{x_{n}}(\cdot)$ converges to a positive multiple of $\mathcal{E}_{P}(\cdot)$ uniformly on compact subsets of $\mf$. Especially, we have
$$\lim_{n\rightarrow\infty}\frac{\ext_{x_{n}}(\alpha)^{1/2}}{\ext_{x_{n}}(\beta)^{1/2}} = \frac{\mathcal{E}_{P}(\alpha)}{\mathcal{E}_{P}(\beta)}$$
for all $\alpha,\beta\in\mathcal{MF}(S)$ with $\mathcal{E}_{P}(\beta)\not=0$.

\end{enumerate}
\end{lem}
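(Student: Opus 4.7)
The plan is to use Kerckhoff's formula to uniformly control $\mathcal{E}_x$ on $\mf$ independently of $x$, then extract a limit by compactness and identify it with a representative of $P$. Setting $y=b$ in Kerckhoff's formula (\ref{eq.tei-dist}) gives $K_x = \sup_{\alpha\in\mathcal{S}} \ext_x(\alpha)/\ext_b(\alpha)$, hence
\[
\mathcal{E}_x(\alpha)^2 \;=\; \frac{\ext_x(\alpha)}{K_x} \;\leq\; \ext_b(\alpha) \qquad (\alpha\in\mathcal{S}),
\]
and moreover $\sup_{\alpha\in\mathcal{S}}\mathcal{E}_x(\alpha)^2/\ext_b(\alpha)=1$. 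Since extremal length is $2$-homogeneous and extends continuously from $\mathbb{R}_{>0}\cdot\mathcal{S}$ to all of $\mf$, both the bound and the $1$-homogeneity (item (1)) extend to $\mf$ for every $x$, so the family $\{\mathcal{E}_x\}_{x\in\mathcal{T}(S)}$ is uniformly bounded on compact subsets of $\mf$ by the fixed continuous function $\sqrt{\ext_b(\cdot)}$.

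Next, for a sequence $x_n\to P$ in the Gardiner-Masur compactification, the definition of the embedding furnishes scalings $\lambda_n>0$ such that $\lambda_n\sqrt{\ext_{x_n}(\alpha)}$ converges pointwise on $\mathcal{S}$ to a non-negative representative of $P$. Writing $c_n:=\lambda_n K_{x_n}^{1/2}$ gives $\mathcal{E}_{x_n}(\alpha)=c_n^{-1}\lambda_n\sqrt{\ext_{x_n}(\alpha)}$. The normalisation $\sup_{\alpha\in\mathcal{S}}\mathcal{E}_{x_n}(\alpha)/\sqrt{\ext_b(\alpha)}=1$ from the previous paragraph pins down $c_n$ from above and below, so after passing to a subsequence one gets $c_n\to c\in(0,\infty)$, yielding a pointwise limit $\mathcal{E}_P$ on $\mathcal{S}$ that represents $P$; this gives items (1) and (2), together with conclusion (3) restricted to $\mathcal{S}$.

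The final step is to upgrade pointwise convergence on $\mathcal{S}$ to uniform convergence on compact subsets of $\mf$, and along the way to obtain continuity of $\mathcal{E}_P$ on $\mf$. The natural route is to prove equicontinuity of $\{\mathcal{E}_{x_n}\}$ with respect to a reference metric on $\mf$ using the fact that $\sqrt{\ext_x(\cdot)}$ is a genuine norm-like function on $\mf$, together with Minsky-type comparison estimates controlling $|\sqrt{\ext_x(\alpha)}-\sqrt{\ext_x(\beta)}|$ in terms of intersection numbers with a fixed filling system of curves. Equicontinuity plus density of $\mathbb{R}_{>0}\cdot\mathcal{S}$ in $\mf$ then allows one to apply Arzelà-Ascoli to extract a uniform limit on compact subsets of $\mf$, which must coincide with $\mathcal{E}_P$ and is continuous.

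The main obstacle is precisely this last step: the Gardiner-Masur compactification a priori only records values on $\mathcal{S}$, so upgrading to a continuous extension on $\mf$ genuinely requires input about how $\ext_x(\alpha)$ behaves jointly in $(x,\alpha)$ as $x$ approaches $\partial_{\mathrm{GM}}\mathcal{T}(S)$. This is where Miyachi's original argument draws on analytic properties of extremal length coming from Jenkins-Strebel quadratic differentials, rather than on any softer compactness statement.
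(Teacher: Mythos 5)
The paper does not prove this lemma at all: it is quoted verbatim from Miyachi \cite{Miy}, so there is no in-paper argument to compare yours against. Judged on its own, your sketch gets the easy normalization right (the bound $\mathcal{E}_x(\alpha)\le \ext_b(\alpha)^{1/2}$, the identity $\sup_{\alpha\in\mathcal{S}}\mathcal{E}_x(\alpha)/\ext_b(\alpha)^{1/2}=1$ via Kerckhoff's formula, and the $1$-homogeneity), but it has two genuine gaps. First, in your second paragraph the claim that the normalization ``pins down $c_n$ from above and below'' is not justified: pointwise convergence of $\lambda_n\ext_{x_n}(\cdot)^{1/2}$ on $\mathcal{S}$ gives $\liminf c_n>0$ by evaluating at a single curve where the limiting representative of $P$ is positive, but it gives no upper bound on $c_n=\sup_{\alpha\in\mathcal{S}}\lambda_n\ext_{x_n}(\alpha)^{1/2}/\ext_b(\alpha)^{1/2}$, since a supremum over the infinite set $\mathcal{S}$ can diverge even when every individual term converges. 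If $c_n\to\infty$ along a subsequence, your candidate $\mathcal{E}_P$ would vanish identically on $\mathcal{S}$ and item (2) would fail. Ruling this out needs either the equicontinuity you defer to the last step, or a quantitative substitute such as the fact (via Minsky's inequality and a finite filling system $\alpha_1,\dots,\alpha_k$) that $\max_j\ext_x(\alpha_j)\asymp K_x$ with constants independent of $x$, which localizes the supremum to finitely many curves.

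Second, and more seriously, everything in the lemma beyond the normalization --- the continuity of $\mathcal{E}_P$ on $\mf$ and the locally uniform convergence in item (3) --- rests on equicontinuity of $\{\mathcal{E}_x\}$ on compact subsets of $\mf$, and your proposal does not supply it: you invoke ``Minsky-type comparison estimates'' controlling $|\ext_x(\alpha)^{1/2}-\ext_x(\beta)^{1/2}|$ without stating or proving any such estimate, and you yourself flag this as the main obstacle requiring Miyachi's analytic input on extremal length and Jenkins--Strebel differentials. As written, the proposal establishes items (1) and (2) only conditionally and does not prove item (3). The outline is not wrong --- it is essentially the skeleton of Miyachi's normal-family argument --- but the load-bearing step is absent rather than merely compressed, so this cannot stand as a proof.
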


By using the function $\mathcal{E}$, Liu-Su \cite{LS} showed that the horoboundary $\partial_{h}\mathcal{T}(S)$ of $\mathcal{T}(S)$
with respect to the Teichm\"uller distance is homeomorphic to the Gardiner-Masur boundary $\partial_{\mathrm{GM}}\mathcal{T}(S)$.
Let $\psi_{z}^{\mathcal{T}}$ denote the horofunction corresponding to $z\in\mathcal{T}(S)$ with respect to the Teichm\"uller distance.
Liu-Su \cite{LS} characterized the horofunction corresponding to $P\in\partial_{\mathrm{GM}}\mathcal{T}(S)$ as:
$$\psi_{P}^{\mathcal{T}}(x) = \log\left(\sup_{\alpha\in\mathcal{S}}\frac{\mathcal{E}_{P}(\alpha)}{\ext_{x}(\alpha)^{1/2}}\bigg/\sup_{\alpha\in\mathcal{S}}\frac{\mathcal{E}_{P}(\alpha)}{\ext_{b}(\alpha)^{1/2}}\right).$$

Now we define a function $\mathcal{E}:\overline{\mathcal{T}(S)}^{\mathrm{GM}}\times\mf\rightarrow \mathbb{R}_{+}$ by
\begin{equation*}
{\mathcal{E}(x,\alpha) := }
\begin{cases}
\ext_{x}(\alpha)^{1/2} & \text{ if } x\in\mathcal{T}(S)\\
\mathcal{E}_{x}(\alpha) & \text{ if } x\in\partial_{\mathrm{GM}}\mathcal{T}(S)\\
\end{cases}
\end{equation*}
With this $\mathcal{E}$, Condition \ref{cond.dist} and \ref{cond.horo} are satisfied for the Teichm\"uller distance.
We now ensure Condition \ref{cond.seq} almost surely.
First, we need a Teichm\"uller distance version of Lemma \ref{lem.Tseq}, which is already given in \cite{LS}.
\begin{lem}[{\cite[Lemma 4.5]{LS}}]\label{lem.formula}
Let $(y_{n})_{n\in\mathbb{Z}_{+}}$ be a sequence of points in $\mathcal{T}(S)$ converging to a point $P$ in the Gardiner-Masur boundary.
Let $x$ be a point in $\mathcal{T}(S)$.
Let $(\alpha_{n})_{n\in\mathbb{Z}_{+}}$ be a sequence in $\mathcal{PMF}(S)$ such that 
$$d_{\mathcal{T}}(y_{n},x) = \frac{1}{2}\log\frac{\mathrm{Ext}_{x}(\alpha_{n})}{\mathrm{Ext}_{y_{n}}(\alpha_{n})}.$$
For any $\beta\in\mathcal{MF}(S)$, if $\mathcal{E}_{P}(\beta) = 0$, 
then any limit point $\alpha_{\infty}\in\mathcal{PMF}(S)$ of a convergent subsequence of the sequence $(\alpha_{n})_{n\in\mathbb{Z}_{+}}$ satisfies $i(\beta,\alpha_{\infty}) = 0$.
\end{lem}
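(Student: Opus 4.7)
The plan is to combine Minsky's extremal-length inequality $i(\alpha,\gamma)^2 \leq \ext_y(\alpha)\,\ext_y(\gamma)$ with the Miyachi continuity statement in part (3) of Lemma \ref{lem.Miyachi}. First, pass to the convergent subsequence of $(\alpha_n)$ provided by the hypothesis and choose representatives in $\mf$ with $\ext_b(\alpha_n) = 1$, so that $\alpha_n \to \alpha_\infty$ in $\mf$. Since the intersection number is jointly continuous on $\mf$, it then suffices to show $i(\alpha_n,\beta) \to 0$; the conclusion $i(\alpha_\infty,\beta) = 0$ in $\pmf$ follows by homogeneity.

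The core of the argument is a direct estimate. Applying Minsky's inequality at the point $y_n$ gives
$$i(\alpha_n,\beta)^2 \leq \ext_{y_n}(\alpha_n)\,\ext_{y_n}(\beta).$$
The maximizing hypothesis coming from Kerckhoff's formula (\ref{eq.tei-dist}) reads $\ext_{y_n}(\alpha_n) = \ext_x(\alpha_n)\,e^{-2\dt(y_n,x)}$, and the triangle inequality $|\dt(y_n,x) - \dt(y_n,b)| \leq \dt(x,b)$ lets me replace $e^{-2\dt(y_n,x)}$ by $K_{y_n}^{-1}$ at the cost of a multiplicative constant $C = C(\dt(x,b))$. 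Rewriting $\ext_{y_n}(\beta)/K_{y_n}$ as $\mathcal{E}_{y_n}(\beta)^2$, the bound becomes
$$i(\alpha_n,\beta)^2 \leq C\,\ext_x(\alpha_n)\,\mathcal{E}_{y_n}(\beta)^2.$$

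The first factor $\ext_x(\alpha_n)$ is bounded by $e^{2\dt(b,x)}$ thanks to Kerckhoff's formula and the normalization $\ext_b(\alpha_n) = 1$, while Lemma \ref{lem.Miyachi}(3), after a further subsequence to pin down the positive multiplicative constant $c$, gives $\mathcal{E}_{y_n}(\beta) \to c\,\mathcal{E}_P(\beta) = 0$. Hence $i(\alpha_n,\beta) \to 0$, which concludes the plan.

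The main obstacle I anticipate is bookkeeping rather than any deep point: a consistent choice of representatives of $\alpha_n \in \pmf$ must be made so that Minsky's inequality, the maximizing identity in Kerckhoff's formula, and Miyachi's degree-one homogeneity of $\mathcal{E}_P$ can be invoked together with constants uniform in $n$, and the base-point shift from $x$ to $b$ used in converting $e^{2\dt(y_n,x)}$ into $K_{y_n}$ contributes a further bounded multiplicative error that must be tracked throughout.
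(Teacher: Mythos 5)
Your argument is correct and complete. Note that the paper itself gives no proof of this statement: it is quoted verbatim from Liu--Su (\cite[Lemma 4.5]{LS}), so there is no in-paper argument to compare against; your proposal is essentially the standard proof (and, as far as I can tell, the same mechanism as in \cite{LS}). The chain of estimates checks out: the maximizing hypothesis gives $\ext_{y_n}(\alpha_n)=\ext_x(\alpha_n)e^{-2\dt(y_n,x)}$, the triangle inequality converts $e^{-2\dt(y_n,x)}$ into $e^{2\dt(x,b)}K_{y_n}^{-1}$, and Minsky's inequality then yields $i(\alpha_n,\beta)^2\leq e^{2\dt(x,b)}\,\ext_x(\alpha_n)\,\mathcal{E}_{y_n}(\beta)^2$ with $\ext_x(\alpha_n)$ uniformly bounded under the normalization $\ext_b(\alpha_n)=1$. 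Two small points worth making explicit in a write-up: (i) Minsky's inequality $i(\alpha,\gamma)^2\leq\ext_y(\alpha)\ext_y(\gamma)$ is an external input not stated anywhere in the paper, so it should be cited; (ii) the multiplicative constant $c$ in Lemma \ref{lem.Miyachi}(3) may depend on the chosen subsequence, but since the limit $c\,\mathcal{E}_P(\beta)$ vanishes for every $c$, the subsequence--subsubsequence argument gives $\mathcal{E}_{y_n}(\beta)\rightarrow 0$ along the full sequence, so no further subsequence extraction is actually needed beyond the one fixing $\alpha_\infty$.
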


By the work of Gardiner-Masur \cite{GM}, $\partial_{\mathrm{GM}}\mathcal{T}(S)$ contains $\pmf$.
Furthermore Miyachi \cite[Corollary 1 in \S 6.1]{Miy-II} observed that a sequence points in 
$\mathcal{T}(S)$ converges to a uniquely ergodic foliation in Thurston's compactification
if and only if the sequence converges to the same uniquely ergodic foliation in the Gardiner-Masur compactification.
Recall that for $\mathbb{P}$-a.e. $\omega=(\omega_{n})_{n\in\mathbb{Z}_{+}}\in\mcg^{\mathbb{Z}_{+}}$, 
$\omega_{n}x$ converges to a point in $\mathcal{UE}(S)$.
This implies that $\omega_{n}x$ converges to a point in $\mathcal{UE}(S)$ in the Gardiner-Masur compactification as well.
Hence we may consider the $\mu$-stationary measure $\nu$ on $\partial_{\mathrm{GM}}\mathcal{T}(S)$ provided $\mu$ is non-elementary.

\begin{lem}\label{lem.GMseq}
For $\mathbb{P}$-a.e. $\omega=(\omega_{n})_{n\in\mathbb{Z}_{+}}\in\mcg^{\mathbb{Z}_{+}}$ and 
$\breve\nu$-a.e. $P\in\partial_{\mathrm{GM}}\mathcal{T}(S)$,
$(\omega_{n}b)_{n\in\mathbb{Z}_{+}}$ and $P$ satisfy Condition \ref{cond.seq}.
\end{lem}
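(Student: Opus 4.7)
The plan is to adapt the proof of Corollary \ref{cor.Tseq} to the Gardiner--Masur setting, using Lemma \ref{lem.formula} in place of Lemma \ref{lem.Tseq}. First, by Theorem \ref{thm.KM}, for $\mathbb{P}$-a.e.\ $\omega$ the sequence $\omega_{n}b$ converges in Thurston's compactification to a uniquely ergodic foliation $F(\omega)\in\mathcal{UE}(S)$, and by Miyachi's observation quoted above, the same convergence holds in $\overline{\mathcal{T}(S)}^{\mathrm{GM}}$. Similarly, $\breve\nu$-a.e.\ $P$ lies in $\mathcal{UE}(S)$, and since $\breve\nu$ is purely non-atomic one may restrict to pairs $(\omega,P)$ for which $P$ and $F(\omega)$ are two \emph{distinct} uniquely ergodic foliations.

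Next, for each $n$ choose $\alpha_{n}\in\pmf$ realising the supremum in Kerckhoff's formula (\ref{eq.tei-dist}) for $d_{\mathcal{T}}(\omega_{n}b,b)$ (such an $\alpha_{n}$ exists by compactness of $\pmf$ and continuity of $\mathcal{E}$). By compactness, pick any convergent subsequence $\alpha_{n_{k}}\to\alpha_{\infty}$ in $\pmf$. I would then apply Lemma \ref{lem.formula} with $y_{n}=\omega_{n}b$, $x=b$, Gardiner--Masur limit $F(\omega)$, and test foliation $\beta=F(\omega)$. Since $F(\omega)\in\pmf\subset\partial_{\mathrm{GM}}\mathcal{T}(S)$ and for points of $\pmf$ sitting in the Gardiner--Masur boundary the function $\mathcal{E}_{F(\omega)}$ is a positive scalar multiple of $i(F(\omega),\cdot)$, we have $\mathcal{E}_{F(\omega)}(F(\omega))=0$. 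Lemma \ref{lem.formula} then yields $i(F(\omega),\alpha_{\infty})=0$, and unique ergodicity of $F(\omega)$ forces $\alpha_{\infty}=F(\omega)$ in $\pmf$. Every subsequential limit being $F(\omega)$, the full sequence satisfies $\alpha_{n}\to F(\omega)$, and by density of $\mathcal{S}$ in $\pmf$ the same convergence holds for any $\epsilon$-near-optimal representative $\alpha_{(\omega_{n}b,b,\epsilon)}\in\mathcal{S}$.

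Finally, by Lemma \ref{lem.Miyachi}(3), $\mathcal{E}_{P}(\alpha_{n})\to \mathcal{E}_{P}(F(\omega))$, and by the same identification $\mathcal{E}_{P}\propto i(P,\cdot)$ this limit is a positive multiple of $i(P,F(\omega))$. Since $P$ and $F(\omega)$ are two distinct uniquely ergodic foliations, $i(P,F(\omega))>0$, so $\mathcal{E}_{P}(\alpha_{n})$ is eventually bounded below by a positive constant. Meanwhile $\mathcal{E}(b,\alpha_{n})=\mathrm{Ext}_{b}(\alpha_{n})^{1/2}$ converges to the finite positive number $\mathrm{Ext}_{b}(F(\omega))^{1/2}$, so the ratio $\mathcal{E}(P,\alpha_{n})/\mathcal{E}(b,\alpha_{n})$ converges to a positive limit; after adjusting for finitely many initial terms this gives the required uniform lower bound $C'>0$ of Condition \ref{cond.seq}.

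The step I expect to require the most care is the identification $\mathcal{E}_{F}\propto i(F,\cdot)$ for $F\in\pmf\subset\partial_{\mathrm{GM}}\mathcal{T}(S)$: it is what makes Lemma \ref{lem.formula} usable with $\beta=F(\omega)$ (through the vanishing $\mathcal{E}_{F(\omega)}(F(\omega))=0$) and also what delivers the positivity $\mathcal{E}_{P}(F(\omega))>0$ in the final step. This should follow from the Gardiner--Masur and Miyachi descriptions of the boundary, but it is the only real input beyond routine compactness and continuity arguments.
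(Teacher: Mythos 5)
Your proposal is correct and follows essentially the same route as the paper: reduce to the case where $P$ and $F(\omega)$ are distinct uniquely ergodic foliations, apply Lemma \ref{lem.formula} with $\beta=F(\omega)$ to force the maximizing directions $\alpha_{n}$ to converge to $F(\omega)$, and then use Lemma \ref{lem.Miyachi} and density of $\mathcal{S}$ to get the uniform lower bound of Condition \ref{cond.seq}. The one step you flag as delicate, namely that $\mathcal{E}_{F}$ is a positive multiple of $i(F,\cdot)$ for $F\in\pmf$ sitting inside the Gardiner--Masur boundary (so that $\mathcal{E}_{F(\omega)}(F(\omega))=0$ and $\mathcal{E}_{P}(F(\omega))>0$), is exactly the fact the paper uses implicitly here and records explicitly in the proof of Corollary \ref{cor. hyp and ext}.
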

\begin{proof}
By Theorem \ref{thm.KM}, we may suppose $F(\omega)$ and $P$ are uniquely ergodic in $\pmf$.
Also as $\breve\nu$ is non-atomic, we may suppose that $\mathcal{E}(P, F(\omega))>0$.
Then by Lemma \ref{lem.formula}, for the sequence $\alpha_{n}$ of points in $\mathcal{P}$ which satisfy
$$d_{\mathcal{T}}(\omega_{n}b,b) = \log\frac{\mathcal{E}(b,\alpha_{n})}{\mathcal{E}(\omega_{n}b,\alpha_{n})},$$
any limit point $\alpha_{\infty}$ satisfies $i(\alpha_{\infty},F(\omega)) = 0$, i.e. $\alpha_{\infty} = F(\omega)$ in $\mathcal{P}$.
As $\mathcal{E}$ is continuous, this implies $\mathcal{E}(P,\alpha_{n})\rightarrow \mathcal{E}(P,F(\omega))$, 
in particular, $\mathcal{E}(P,\alpha_{n})$ is bounded below by some constant independent of $n$ for large enough $n$.
Then Lemma \ref{lem.Miyachi} and the density of $\mathcal{S}$ in $\mathcal{P}$ imply Condition \ref{cond.seq} for $\omega$ and $P$.
\end{proof}

Now we define $c_{B}^{\mathcal{T}}:\mcg\times\overline{\mathcal{T}(S)}^{\mathrm{GM}}\rightarrow\mathbb{R}_{+}$ by
$$c_{B}^{\mathcal{T}}(g,P):=\psi_{P}^{\mathcal{T}}(g^{-1}b).$$
We denote by $\ell_{\mathcal{T}}(\mu)$ the drift of the random walk given by $\mu$ with respect to the Teichm\"uller distance and define $L_\mathcal{T}(\mu)$ and $\breve L_\mathcal{T}(\mu)$ using (\ref{eq.L}).

By Lemma \ref{lem.GMseq}, the assumption of Lemma \ref{lem.drift} is satisfied.
Thus we have,
\begin{prop}[{c.f. \cite[Proposition 2.2]{GMM}}]\label{prop.teich-drift}
Let $\mu$ be a non-elementary probability measure on $\mathrm{MCG}(S)$ with finite first moment with respect to the Teichm\"uller distance.
Let $\breve\nu$ be the $\breve\mu$-stationary measure on $\mathcal{PMF}(S)$.
Then 
$$\ell_{\mathcal{T}}(\mu) = \int_{\mcg\times\partial_{\mathrm{GM}}\mathcal{T}(S)} c_{B}^{\mathcal{T}}(g,\xi)d\breve\mu(g)d\breve\nu(\xi).$$
\end{prop}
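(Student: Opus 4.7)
The plan is to obtain the proposition as a direct application of Lemma~\ref{lem.drift}; all of the heavy lifting has already been packaged into Kerckhoff's formula, the work of Miyachi and Liu--Su, and Lemma~\ref{lem.GMseq}. I will (i) record that $(X,d)=(\mathcal{T}(S),d_{\mathcal{T}})$ fits the abstract framework of Section~\ref{sec.conti}, (ii) identify the horoboundary $\hb\mathcal{T}(S)$ with the Gardiner--Masur boundary so that $\breve\nu$ may be regarded as a $\breve\mu$-stationary measure on $\hb\mathcal{T}(S)$, and (iii) invoke Lemma~\ref{lem.GMseq} to verify the almost-sure Condition~\ref{cond.seq}.

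For step (i), Kerckhoff's identity~(\ref{eq.tei-dist}) shows that with $\mathcal{S}$ the set of simple closed curves and $i(x,\alpha)=\mathcal{E}(x,\alpha)=\ext_{x}(\alpha)^{1/2}$, the Teichm\"uller distance satisfies Condition~\ref{cond.dist}; the positivity $\inf_{\alpha}\ext_{x}(\alpha)^{1/2}>0$ holds on any fixed $x$. Liu--Su's description of horofunctions via $\mathcal{E}_{P}$, combined with the extension $\mathcal{E}(x,\alpha)$ defined in this subsection, then gives Condition~\ref{cond.horo}. For step (ii), the Liu--Su homeomorphism $\hb\mathcal{T}(S)\cong\partial_{\mathrm{GM}}\mathcal{T}(S)$ is $\mcg$-equivariant by functoriality of the construction; since Theorem~\ref{thm.KM} places $\breve\nu$ on $\mathcal{UE}(S)\subset\pmf\subset\partial_{\mathrm{GM}}\mathcal{T}(S)$, it transports to a $\breve\mu$-stationary Borel probability measure on $\hb\mathcal{T}(S)$, and under this identification the Busemann cocycle $c_{B}$ of Section~\ref{sec.horo} coincides with $c_{B}^{\mathcal{T}}$.

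For step (iii), Lemma~\ref{lem.GMseq} asserts precisely that $\mathbb{P}$-a.e.\ $\omega$ and $\breve\nu$-a.e.\ $P\in\partial_{\mathrm{GM}}\mathcal{T}(S)$ satisfy Condition~\ref{cond.seq}, which is the hypothesis of Lemma~\ref{lem.drift}. Finite first moment with respect to $d_{\mathcal{T}}$ is assumed. Applying Lemma~\ref{lem.drift} therefore yields the displayed formula, and since $\breve\nu$ is supported in $\partial_{\mathrm{GM}}\mathcal{T}(S)$ the integral may be written over $\mcg\times\partial_{\mathrm{GM}}\mathcal{T}(S)$. The only delicate point is the bookkeeping across the three compactifications ($\hb\mathcal{T}(S)$, the Gardiner--Masur boundary, and $\pmf$), but once $\mcg$-equivariance of the Liu--Su identification and the Kaimanovich--Masur support property for $\breve\nu$ are recorded, there is no real obstacle; the statement is essentially a translation of Lemma~\ref{lem.drift} into the Teichm\"uller setting.
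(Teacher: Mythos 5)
Your proposal is correct and follows essentially the same route as the paper: the preceding discussion in the subsection (Kerckhoff's formula, the Gardiner--Masur/Miyachi/Liu--Su identifications) establishes Conditions \ref{cond.dist} and \ref{cond.horo}, and the paper then cites Lemma \ref{lem.GMseq} to verify the hypothesis of Lemma \ref{lem.drift} and concludes. Your extra remarks on equivariance of the Liu--Su identification and the bookkeeping among the compactifications are sound but are left implicit in the paper's one-line proof.
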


Then we have the continuity of the drift with respect to the Teichm\"uller distance.
\begin{thm}\label{thm.conti-Teich}
Let $(\mu_{i})_{i\in\mathbb{N}}$ be a sequence of non-elementary probability measures on $\mcg$ with finite first moment   
with respect to the Teichm\"uller distance on $\mathcal{T}(S)$, 
converging simply to a non-elementary probability measure $\mu_{\infty}$.
Suppose that $\breve{L}_{\mathcal{T}}(\mu_{i})\rightarrow \breve{L}_{\mathcal{T}}(\mu_{\infty})$.
Then $\ell_{\mathcal{T}}(\mu_{i}) \rightarrow \ell_{\mathcal{T}}(\mu_{\infty})$. 
\end{thm}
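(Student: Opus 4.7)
The plan is to deduce this theorem by applying Theorem \ref{thm.conti} to the Teichm\"uller distance, so the task reduces to verifying the hypotheses in that setting. The conditions on the underlying space are already in hand: Condition \ref{cond.dist} holds by Kerckhoff's formula (\ref{eq.tei-dist}) with the set $\mathcal{S}$ of simple closed curves and the data $i(x,\alpha) := \ext_{x}(\alpha)^{1/2}$, and Condition \ref{cond.horo} holds by the Liu-Su identification of $\hb\mathcal{T}(S)$ with $\partial_{\mathrm{GM}}\mathcal{T}(S)$ combined with Miyachi's function $\mathcal{E}$, as spelled out at the end of Section \ref{sec.Teich}. Condition \ref{cond.seq} is guaranteed almost surely by Lemma \ref{lem.GMseq}. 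The assumption on first moments is given, and $\breve{L}_{\mathcal{T}}(\mu_{i}) \to \breve{L}_{\mathcal{T}}(\mu_{\infty})$ is directly hypothesized.

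The one remaining ingredient is the weak convergence of the stationary measures on $\partial_{\mathrm{GM}}\mathcal{T}(S)$. I would handle this as follows. For each $i \in \mathbb{N} \cup \{\infty\}$, since $\breve\mu_{i}$ is non-elementary on $\mcg$, Theorem \ref{thm.KM} supplies a unique $\breve\mu_{i}$-stationary measure $\breve\nu_{i}$ on $\pmf$, concentrated on $\mathcal{UE}(S)$. Via Miyachi's observation that $\mathcal{UE}(S) \subset \pmf$ is included in $\partial_{\mathrm{GM}}\mathcal{T}(S)$ with compatible topologies, I can regard each $\breve\nu_{i}$ as a Borel probability measure on the compact space $\partial_{\mathrm{GM}}\mathcal{T}(S)$. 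Since this space is compact, the sequence $(\breve\nu_{i})$ has weakly convergent subsequences. Because the $\mcg$-action on $\partial_{\mathrm{GM}}\mathcal{T}(S)$ is continuous and $\breve\mu_{i} \to \breve\mu_{\infty}$ simply with the obvious moment control, any weak limit $\nu'$ is $\breve\mu_{\infty}$-stationary, hence coincides with $\breve\nu_{\infty}$ by the uniqueness in Theorem \ref{thm.KM}. A standard subsequence argument then yields weak convergence of the full sequence.

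With all hypotheses of Theorem \ref{thm.conti} verified, that theorem immediately gives $\ell_{\mathcal{T}}(\mu_{i}) \to \ell_{\mathcal{T}}(\mu_{\infty})$. Alternatively, and equivalently, one can run the proof directly using Proposition \ref{prop.teich-drift}: the integrand $c_{B}^{\mathcal{T}}(g,\xi)$ is continuous and bounded by $\max(d_{\mathcal{T}}(g^{-1}b,b), d_{\mathcal{T}}(b,g^{-1}b))$ (here symmetric since $d_{\mathcal{T}}$ is symmetric), so weak convergence of $\breve\nu_{i}$ gives pointwise convergence of $g \mapsto \int c_{B}^{\mathcal{T}}(g,\xi)\,d\breve\nu_{i}(\xi)$, and the summability transfer from $\sum \breve\mu_{i}(g) d_{\mathcal{T}}(g^{-1}b,b) \to \sum \breve\mu_{\infty}(g) d_{\mathcal{T}}(g^{-1}b,b)$ combined with simple convergence $\breve\mu_{i} \to \breve\mu_{\infty}$ allows dominated-convergence-type passage to the limit in the sum.

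The main obstacle I expect is the step showing weak convergence of $\breve\nu_{i}$ to $\breve\nu_{\infty}$ on $\partial_{\mathrm{GM}}\mathcal{T}(S)$ rather than on $\pmf$: one must be careful that the $\breve\nu_{i}$, originally produced as measures on $\pmf$ via Theorem \ref{thm.KM}, are being compared inside the same ambient compact space with a continuous $\mcg$-action. Miyachi's compatibility result on uniquely ergodic limits in the two compactifications is exactly what makes this identification legitimate, and it is what keeps both the stationarity equation and the limit identification consistent when passing to subsequences.
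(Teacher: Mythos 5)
Your proposal is correct and follows essentially the same route as the paper, which omits the proof precisely because it runs parallel to that of Theorem \ref{thm.conti-Thurston}: verify Conditions \ref{cond.dist} and \ref{cond.horo} via Kerckhoff's formula and the Liu--Su/Miyachi description of the horoboundary, invoke Lemma \ref{lem.GMseq} for Condition \ref{cond.seq}, obtain weak convergence of the stationary measures by compactness plus uniqueness, and apply Theorem \ref{thm.conti}. Your added care about regarding the $\breve\nu_{i}$ as measures on $\partial_{\mathrm{GM}}\mathcal{T}(S)$ via Miyachi's compatibility of the two compactifications on $\mathcal{UE}(S)$ is exactly the point the paper itself relies on just before Lemma \ref{lem.GMseq}.
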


As the proof can go similarly to the one for Theorem \ref{thm.conti-Thurston}, we omit the proof.

As we discussed at the beginning of this subsection, the work of Choi-Rafi \cite{CR} implies that 
$\ell_{\mathcal{T}}(\mu) = \ell_{\mathrm{Th}}(\mu)$.
Furthermore, we see from \cite{CR} that having finite first moment with respect to the Teichm\"uller distance and 
Thurston's Lipschitz distance are equivalent.
Putting this together with Proposition \ref{prop.Tdrift} and Proposition \ref{prop.teich-drift},
we have the following.
\begin{coro}\label{cor. hyp and ext}
Let $\mu$ be a non-elementary probability measure on $\mcg$ with finite first moment.
Then we have
\begin{align*}
&\int_{\mcg\times\pmf}
\log\left(\sup_{\alpha\in\mathcal{S}}\frac{i(\xi,\alpha)}{\mathrm{Len}_{g^{-1}b}(\alpha)} \bigg/ \sup_{\alpha\in\mathcal{S}}\frac{i(\xi,\alpha)}{\mathrm{Len}_{b}(\alpha)}d\mu(g)d\nu(\xi)\right)\\
=&\int_{\mcg\times\pmf}
\log\left(\sup_{\alpha\in\mathcal{S}}\frac{i(\xi,\alpha)}{\ext_{g^{-1}b}(\alpha)^{1/2}}\bigg/
\sup_{\alpha\in\mathcal{S}}\frac{i(\xi,\alpha)}{\ext_{b}(\alpha)^{1/2}}d\mu(g)d\nu(\xi)\right).
\end{align*}
\end{coro}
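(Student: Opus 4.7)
The plan is to recognize that the two integrands are the Busemann cocycles for the Thurston and Teichm\"uller distances respectively, so that the two sides are precisely the drifts $\ell_{\mathrm{Th}}(\mu)$ and $\ell_{\mathcal{T}}(\mu)$; the equality of these drifts is the content of \cite{CR}. So the whole argument is essentially assembly of results already in the paper, with one small compatibility check.

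In more detail, the first step is to match the left-hand integrand with $c_{B}^{\mathrm{Th}}(g,\xi)=\psi_{\xi}^{\mathrm{Th}}(g^{-1}b)$. By Walsh's formula (\ref{eq.Thoro}) applied at $x=g^{-1}b$, and recalling that $i(x,\cdot)=\mathrm{Len}_{x}(\cdot)$ on $\mathcal{T}(S)$, this is exactly the log of the ratio of suprema on the left-hand side (modulo the usual identification of the Lipschitz base point with $b$). Proposition \ref{prop.Tdrift} then identifies the integral with $\ell_{\mathrm{Th}}(\mu)$. Similarly, the right-hand integrand is $c_{B}^{\mathcal{T}}(g,\xi)$ expressed via the Liu-Su formula involving $\mathcal{E}_{P}(\alpha)$ and $\mathrm{Ext}^{1/2}$, and Proposition \ref{prop.teich-drift} gives the integral as $\ell_{\mathcal{T}}(\mu)$.

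The one point that needs care is that the right-hand integrand as written in the corollary uses $i(\xi,\alpha)$ in both numerator and denominator, whereas Liu-Su uses $\mathcal{E}_{P}(\alpha)$. This is resolved by Theorem \ref{thm.KM}, which guarantees that $\breve\nu$ is supported on $\mathcal{UE}(S)\subset\pmf$; by Miyachi \cite{Miy-II} a point of $\mathcal{UE}(S)$ viewed in $\partial_{\mathrm{GM}}\mathcal{T}(S)$ represents the same projective class in $\PRs$ as it does in $\pmf$, so $\mathcal{E}_{P}(\cdot)$ and $i(P,\cdot)$ agree up to a positive scalar, which cancels in the ratio. Hence the two integrands coincide $\breve\nu$-almost everywhere, and the identification of the right-hand integral with $\ell_{\mathcal{T}}(\mu)$ is valid.

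Finally, by \cite{CR} the Teichm\"uller and Thurston Lipschitz distances differ by a bounded amount in the thick part, so the two drifts are equal: $\ell_{\mathrm{Th}}(\mu)=\ell_{\mathcal{T}}(\mu)$, and the corollary follows. The only mild obstacle I foresee is bookkeeping between $\mu$ versus $\breve\mu$ and $\nu$ versus $\breve\nu$ in the statement versus the propositions, and the check that the almost-sure identification of $\mathcal{E}_{P}$ with the intersection number really goes through on the support of $\breve\nu$; both are handled by Kaimanovich-Masur together with Miyachi's comparison of the two compactifications on uniquely ergodic points.
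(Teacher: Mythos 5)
Your proposal is correct and follows essentially the same route as the paper: both integrals are identified with $\ell_{\mathrm{Th}}(\mu)$ and $\ell_{\mathcal{T}}(\mu)$ via Propositions \ref{prop.Tdrift} and \ref{prop.teich-drift}, the equality of the two drifts comes from Choi--Rafi, and the substitution of $i(\xi,\cdot)$ for $\mathcal{E}_{\xi}(\cdot)$ is justified by the agreement of the two projective embeddings on Thurston's boundary. The only cosmetic difference is that the paper invokes the Gardiner--Masur containment $\pmf\subset\partial_{\mathrm{GM}}\mathcal{T}(S)$ inside $\PRs$ to equate the ratios for every $\xi\in\pmf$ at once, whereas you restrict to the uniquely ergodic support of the stationary measure and appeal to Miyachi's comparison of the compactifications; both suffice.
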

\begin{proof}
As the Gardiner-Masur boundary contains Thurston's boundary in $P\mathbb{R}^{\mathcal{S}}$,
we have $$\frac{i(\alpha,\beta)}{i(\alpha,\gamma)} = \frac{\mathcal{E}_{\alpha}(\beta)}{\mathcal{E}_{\alpha}(\gamma)},$$
if $\alpha\in\pmf$ and $\beta,\gamma\in\mf$.
Hence the results follows from the characterization of horofunctions and Proposition \ref{prop.Tdrift} and \ref{prop.teich-drift}.
\end{proof}

\section{Continuity of drift of degenerating sequence}\label{sec.NS}
Let $G$ be a group of isometries on a metric space $(X,d)$ with a compact, metrizable boundary $\partial X$.
We suppose that the action of $G$ extends continuously on $\partial X$.
We consider a probability measure $\mu$ on $G$ with finite first moment i.e. $L(\mu)<\infty$.
First note that the drift $\ell(\mu)$ satisfies $\ell(\mu)=\lim_{n\rightarrow\infty} L(\mu^{*n})/n$, and $L(\mu^{*n})\leq nL(\mu)$.
Hence we have
\begin{prop}\label{prop.zero}
If a sequence of probability measures $\mu_{i}$ converges to a probability measure $\mu_{\infty}$ with
$L(\mu_{i})\rightarrow L(\mu_{\infty})$ and 
$L(\mu_{\infty}) = 0$, then $\ell(\mu_{i})$ converges to $0$.
\end{prop}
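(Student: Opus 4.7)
The plan is to read off the universal inequality $\ell(\mu)\le L(\mu)$ directly from the two facts recalled in the sentence preceding the statement, namely the identity $\ell(\mu)=\lim_{n\to\infty} L(\mu^{*n})/n$ and the subadditive bound $L(\mu^{*n})\le nL(\mu)$. The latter is itself an instance of the triangle inequality together with isometry of the $G$-action: if $\omega_n=g_1\cdots g_n$ with $g_j$ i.i.d.\ of law $\mu$, then $d(b,\omega_n b)\le \sum_{j=1}^n d(g_1\cdots g_{j-1}b,g_1\cdots g_j b)=\sum_{j=1}^n d(b,g_j b)$, so taking expectations gives $L(\mu^{*n})\le nL(\mu)$. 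Dividing by $n$ and passing to the limit then yields $\ell(\mu)\le L(\mu)$ for every $\mu$ with finite first moment.

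Once the bound $\ell(\mu)\le L(\mu)$ is in hand, the proposition follows from a direct squeeze. Since the drift is the almost-sure limit of the non-negative quantities $d(\omega_n b,b)/n$, we have $\ell(\mu_i)\ge 0$ for every $i$. Combining this with the upper bound gives
\[
0\le \ell(\mu_i)\le L(\mu_i)\longrightarrow L(\mu_\infty)=0,
\]
so $\ell(\mu_i)\to 0$. Applying the same inequality to $\mu_\infty$ gives $\ell(\mu_\infty)\le L(\mu_\infty)=0$, hence $\ell(\mu_\infty)=0$, and therefore $\ell(\mu_i)\to\ell(\mu_\infty)$, which is the desired conclusion.

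Main obstacle: there is essentially none. The proposition is a one-line corollary of the two observations made immediately before its statement; no input from horofunctions, stationary measures, or the north–south dynamics used later in Section \ref{sec.NS} is required, and the role of the compact metrizable boundary $\partial X$ is not even invoked at this stage. The content of the proposition is purely a semicontinuity/squeeze statement at the level of the first moments.
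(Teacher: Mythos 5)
Your argument is correct and is exactly the paper's: the author derives the proposition from the same two observations ($\ell(\mu)=\lim_n L(\mu^{*n})/n$ and $L(\mu^{*n})\leq nL(\mu)$, hence $0\leq\ell(\mu_i)\leq L(\mu_i)\to 0$) stated in the sentence immediately preceding it. Your spelled-out verification of the subadditivity via the triangle inequality and isometry of the action is just a more explicit rendering of the same squeeze.
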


For example, if $\mu_{\infty}=\delta_{\rm{id}}$, then $L(\mu_{\infty})=0$, where 
$\delta_{g}$ denotes the Dirac measure on $g\in G$.
More non-trivial case is when $\mu_{\infty}$ is $\delta_{a}$ where $a\in G$ has a fixed point on $X$.
As $\ell(\mu)$ is independent of the choice of the base point, we may choose the fixed point as the base point, and then
$L(\delta_{a}) = 0$.
The condition $L(\mu_{i})\rightarrow L(\mu_{\infty})$ is satisfied if, for example, 
$\mu_{i}$ has the same finite support for all $i\in\mathbb{N}$.

Another case we consider in this section is when $\mu_{\infty} = \delta_{g}$ where 
$g\in G$ acts on $\partial X$ with so called the north-south dynamics.
 An element $g\in G$ acting on $\partial X$ has {\em north-south dynamics} 
 if there are two fixed points $\gamma_{+},\gamma_{-}\in \partial X$ with the following property:
for any open neighborhood $U_{+}\subset  \partial X$ of $\gamma_{+}$ 
 (resp. $U_{-}\subset \partial X$ of $\gamma_{-}$) 
 and compact set $K_{+}\subset \partial X$ with $\gamma_{-}\not\in K_{+}$ 
 (resp. $K_{-}\subset \partial X$ with $\gamma_{+}\not\in K_{-}$),
  there exists $N\in\mathbb{N}$ such that
 for any $n\geq N$, we have $g^{n}(K_{+})\subset U_{+}$ (resp. $g^{-n}(K_{-})\subset U_{-}$).
 We define the translation distance of $g$ on $X$ by
 $$\ell(g):=\lim_{n\rightarrow\infty}\frac{ d(g^{n}b,b)}{n},$$
 where $b\in X$ is a fixed point.
When $\mu_{\infty}=\delta_{g}$, 
$\mu_{\infty}$-stationary measures are not unique.
As pointed out in \cite{GMM}, non-uniqueness of the stationary measures sometimes causes the non-continuity of the drift.
For example,  on the infinite dihedral group $\mathbb{Z}\rtimes \mathbb{Z}/2$,
 the measures $\mu_{i} = (1-1/i)\delta_{(1,0)} +(1/i )\delta_{(0,1)}$ 
  have zero drift since the $\mathbb{Z}/2$ element symmetrises everything in $\mathbb{Z}$,
   while the limiting measure $\mu_{\infty}=\delta_{(1,0)}$ has drift 1.
 By using north-south dynamics, we will prove that 
$\mu_{i}$-stationary measures $\nu_{i}$ converges to a ``nice'' $\mu_{\infty}$-stationary measure.
The following lemma is an easy consequence of $\mu_{\infty}$-stationarity.
\begin{lem}\label{lem.ns}
Let $g\in G$ be an element with north-south dynamics with fixed points $\gamma_{+},\gamma_{-}\in\partial X$.
Let further $\mu_{\infty}$ be a probability measure on $G$ defined by $\mu_{\infty}(g) = 1$.
Then any $\mu_{\infty}$-stationary measure $\nu_{\infty}$ is concentrated on the set $\{\gamma_{+},\gamma_{-}\}$, in other words
$\nu_{\infty} = p\delta_{\gamma_{+}}+(1-p)\delta_{\gamma_{-}}$ where $p\in[0,1]$ and $\delta_{\gamma_{+}},\delta_{\gamma_{-}}$ are the Dirac measures.
\end{lem}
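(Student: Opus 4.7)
The plan is to unpack $\mu_\infty$-stationarity into pure $g$-invariance, decompose $\nu_\infty$ into its atomic part at the fixed points plus a remainder, and then exploit north-south dynamics to rule out the remainder.

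First, because $\mu_\infty(g)=1$, the stationarity condition $\nu_\infty(A)=\sum_{h\in G}\mu_\infty(h)\nu_\infty(h^{-1}A)$ collapses to $\nu_\infty(A)=\nu_\infty(g^{-1}A)$ for every Borel $A\subseteq\partial X$. Equivalently, $g_*\nu_\infty=\nu_\infty$, so $\nu_\infty$ is $g$-invariant in the usual sense.

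Next, set $p_+:=\nu_\infty(\{\gamma_+\})$, $p_-:=\nu_\infty(\{\gamma_-\})$, and let $\mu'$ be the restriction of $\nu_\infty$ to $\partial X\setminus\{\gamma_+,\gamma_-\}$, so that
\[\nu_\infty = p_+\delta_{\gamma_+}+p_-\delta_{\gamma_-}+\mu'.\]
Since $\delta_{\gamma_+}$ and $\delta_{\gamma_-}$ are individually $g$-invariant, the remainder $\mu'$ is also $g$-invariant, and by construction $\mu'(\{\gamma_+\})=\mu'(\{\gamma_-\})=0$.

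The heart of the argument is to show $\mu'\equiv 0$. Fix any compact set $K\subset\partial X\setminus\{\gamma_+,\gamma_-\}$; in particular $\gamma_-\notin K$. Using metrizability of $\partial X$, choose a decreasing sequence of open neighborhoods $V_m\downarrow\{\gamma_+\}$. North-south dynamics applied to $K$ and $V_m$ yields some $N_m$ with $g^{n}K\subseteq V_m$ for all $n\geq N_m$. By $g$-invariance of $\mu'$,
\[\mu'(K)=\mu'(g^{n}K)\leq\mu'(V_m).\]
Continuity of the finite measure $\mu'$ from above gives $\mu'(V_m)\to\mu'(\{\gamma_+\})=0$ as $m\to\infty$, hence $\mu'(K)=0$. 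Since $\partial X$ is compact metrizable, the open set $\partial X\setminus\{\gamma_+,\gamma_-\}$ is $\sigma$-compact, so exhausting by compact subsets gives $\mu'(\partial X\setminus\{\gamma_+,\gamma_-\})=0$; together with $\mu'(\{\gamma_\pm\})=0$ this shows $\mu'=0$. Therefore $\nu_\infty=p_+\delta_{\gamma_+}+p_-\delta_{\gamma_-}$ with $p_++p_-=1$, which is the claimed form with $p:=p_+$.

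The only subtle point is the passage from ``$\mu'(K)=0$ for every compact $K$ disjoint from the fixed points'' to ``$\mu'=0$'', which uses inner regularity / $\sigma$-compactness of the open set $\partial X\setminus\{\gamma_+,\gamma_-\}$ inside a compact metrizable space; this is standard and poses no real obstacle. The rest is a direct application of north-south dynamics to a $g$-invariant measure.
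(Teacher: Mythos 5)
Your proof is correct and follows essentially the same route as the paper: both collapse stationarity to $g$-invariance of $\nu_{\infty}$ and then combine north--south dynamics with regularity/metrizability to show the measure of anything away from $\{\gamma_{+},\gamma_{-}\}$ vanishes. The only (harmless) difference is in that last step: the paper notes that a set whose closure avoids both fixed points admits infinitely many pairwise disjoint $g$-translates of equal measure, forcing measure zero by finiteness of $\nu_{\infty}$, whereas you push a compact set into shrinking neighborhoods of $\gamma_{+}$ and invoke continuity from above; both are valid one-line applications of the dynamics.
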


\begin{proof}
Note that since $\partial X$ is metrizable, $\nu_{\infty}$ is regular.
As $\nu_{\infty}$ is $\mu_{\infty}$-stationary, 
$\nu_{\infty}(A) =\nu_{\infty}(g^{-n}A)$ for all $n\in\mathbb{N}$ and measurable subset $A\subset\partial X$.
By the north-south dynamics of $g$ if the closure $\overline A$ of $A$ does not contain $\gamma_{+}$ and $\gamma_{-}$, 
then there must be infinitely  many disjoint translates of $A$. Hence $\nu_{\infty}(A)=0$. 
As this holds for any $A$ with $\gamma_{+},\gamma_{-}\not\in\overline A$, the conclusion holds.
\end{proof}
Now we prove the following.
\begin{lem}\label{lem.ns2}
 Let $g\in G$ have north-south dynamics with fixed points $\gamma_{+}, \gamma_{-}\in\partial X$
  and $h\in G$ be an element such that 
  $\{h\gamma_{+}(g), h\gamma_{-}(g)\}\cap \{\gamma_{+}(g),\gamma_{-}(g)\} = \emptyset$.
 Let $\mu_{i}$ be a probability measure defined by $\mu_{i}(g) = 1-1/i$ and $\mu_{i}(h) = 1/i$, and $\mu_{\infty}$ defined by $\mu_{\infty}(g) = 1$.
 We further let $\nu_{i}$ be a $\mu_{i}$-stationary measure.
 Then any weak limit of $\nu_{i}$ is $\delta_{\gamma_{+}}$.
 \end{lem}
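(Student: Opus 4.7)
The plan is to exploit the $\mu_i$-stationarity equation $\nu_i = (1-1/i) g_* \nu_i + (1/i) h_* \nu_i$ to rewrite $\nu_i$ as a geometric average of the push-forwards $g^k_* h_* \nu_i$, and then apply the north-south dynamics of $g$ to identify the weak limit.

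First, by compactness of the space of probability measures on the compact metrizable $\partial X$, we may pass to a weakly convergent subsequence $\nu_i \rightharpoonup \nu_\infty$. Since the action of $G$ on $\partial X$ is continuous and $\mu_i$ converges simply to $\mu_\infty$, $\nu_\infty$ is $\mu_\infty$-stationary, and by Lemma \ref{lem.ns} it has the form $\nu_\infty = p\delta_{\gamma_+} + (1-p)\delta_{\gamma_-}$ for some $p \in [0,1]$. The goal is to show $p = 1$.

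Next, I would iterate the stationarity equation by substituting it into its own first term; after $n$ rounds,
\begin{equation*}
\nu_i = (1 - 1/i)^n g^n_* \nu_i + \frac{1}{i} \sum_{k=0}^{n-1} (1 - 1/i)^k g^k_* h_* \nu_i,
\end{equation*}
and letting $n \to \infty$ (so that $(1-1/i)^n \to 0$) yields the resolvent-type identity
\begin{equation*}
\nu_i = \frac{1}{i} \sum_{k=0}^{\infty} (1 - 1/i)^k g^k_* h_* \nu_i.
\end{equation*}
Fix a continuous $f : \partial X \to \mathbb{R}$, so that $\int f\, d\nu_i = \frac{1}{i} \sum_{k=0}^\infty (1-1/i)^k \int f(g^k h \xi)\, d\nu_i(\xi)$. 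By the north-south dynamics of $g$, $g^k \to \gamma_+$ uniformly on compact subsets of $\partial X \setminus \{\gamma_-\}$; since the disjointness hypothesis ensures $h\gamma_+, h\gamma_- \neq \gamma_-$, applied along $\xi \mapsto h\xi$ this yields $f(g^k h\xi) \to f(\gamma_+)$ uniformly on compact subsets of $\partial X \setminus \{h^{-1}\gamma_-\}$.

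The main technical step is controlling the tails of the series: for small enough $\delta > 0$, the mass $\nu_i(B_\delta(h^{-1}\gamma_-))$ tends to zero as $i \to \infty$, because $\nu_\infty$ assigns no mass to $h^{-1}\gamma_-$ (the point $h^{-1}\gamma_-$ lies outside $\{\gamma_+, \gamma_-\}$, by the disjointness hypothesis on $h$). Splitting the sum at some threshold $N$, the initial segment contributes $O(N \|f\|_\infty / i)$, while on the tail $k \geq N$ the integrand is within $\varepsilon$ of $f(\gamma_+)$ outside the small ball $B_\delta(h^{-1}\gamma_-)$ and is uniformly bounded by $\|f\|_\infty$ inside it. Taking $i \to \infty$ first, then $N$ large enough that uniform convergence applies, and finally $\delta, \varepsilon \to 0$, we obtain $\int f\, d\nu_i \to f(\gamma_+)$. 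Hence $\nu_\infty = \delta_{\gamma_+}$, and since this applies to every weakly convergent subsequence, $\nu_i \rightharpoonup \delta_{\gamma_+}$.
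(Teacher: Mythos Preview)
Your proof is correct but follows a genuinely different route from the paper's. The paper constructs a closed neighbourhood $A$ of $\gamma_{-}$ that is backward-invariant under $g$ (namely $A=\bigcup_{n\geq 0} g^{-n}(A_{-})$ for a small closed $A_{-}\ni\gamma_{-}$) and such that $h^{-1}A$ misses both $\gamma_{+}$ and $\gamma_{-}$; a \emph{single} application of stationarity then yields $\nu_{i}(A)\leq \nu_{i}(h^{-1}A)$, and the right-hand side tends to $0$ by Portmanteau and Lemma~\ref{lem.ns}. You instead iterate the stationarity equation to obtain the resolvent identity $\nu_{i}=\tfrac{1}{i}\sum_{k\geq 0}(1-1/i)^{k}(g^{k}h)_{*}\nu_{i}$ and then use the uniform convergence $g^{k}\to\gamma_{+}$ on compacta avoiding $\gamma_{-}$, controlling the bad set $B_{\delta}(h^{-1}\gamma_{-})$ via the same weak-convergence argument. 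The paper's invariant-set trick is shorter and avoids any series manipulation; your resolvent approach is more hands-on but has the advantage of making the role of the geometric weighting $(1-1/i)^{k}$ completely explicit, and it would adapt more readily to perturbations where the support of $\mu_{i}$ is not exactly $\{g,h\}$. One small expository point: the order of limits should be stated as fixing $\varepsilon,\delta$ first, then choosing $N=N(\varepsilon,\delta)$ from uniform convergence on $\partial X\setminus B_{\delta}(h^{-1}\gamma_{-})$, and only then sending $i\to\infty$; your phrasing reverses this, though the intended estimate is clear.
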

\begin{proof}
By continuity of the action of $G$ on $\partial X$, $\nu_{i}$ converges weekly to a $\mu_{\infty}$-stationary measure $\nu_{\infty}$.
Let $A_{-}$ be a small closed neighborhood of $\gamma_{-}$.
As $g$ has north-south dynamics, by taking $A_{-}$ small enough, 
we may suppose that $$A:=\bigcup_{n=0}^{\infty} g^{-n}(A_{-})$$ does not contain $\gamma_{+}$.
By our assumption of $h$, we may also assume that $h^{-1}A$ does not contain $\gamma_{+},\gamma_{-}$.
By construction, $g^{-1}A\subset A$,  and north-south dynamics of $g$ implies that $A$ is closed.
Then for $\nu_{i}$ is $\mu_{i}$-stationary, 
\begin{align*}
\nu_{i}(A) &= \mu_{i}(g)\nu_{i}(g^{-1}A) + \mu_{i}(h)\nu_{i}(h^{-1}A)\\
		     &\leq (1-1/i)\nu_{i}(A) + (1/i )\nu_{i}(h^{-1}A)\\
&\iff \nu_{i}(A)/i \leq \nu_{i}(h^{-1}A)/i\\
&\iff \nu_{i}(A)   \leq \nu_{i}(h^{-1}A).
\end{align*}
As $h^{-1}A$ misses both $\gamma_{+},\gamma_{-}$, Lemma \ref{lem.ns} implies that
$\nu_{i}(h^{-1}A)\rightarrow 0$ as $i\rightarrow \infty$.
Hence $\lim_{i\rightarrow\infty}\nu_{i}(A) = 0$.
This implies that for an open neighborhood $B\subset A$ of $\gamma_{-}$, we have
$\lim_{i\rightarrow\infty}\nu_{i}(B) = 0$.
As $B$ is open, 
$\nu_{\infty}(B)\leq \liminf_{i\rightarrow \infty}\nu_{i}(B) = 0$.
Again by Lemma \ref{lem.ns}, we see that $\nu_{\infty} = \delta_{\gamma_{+}}$.
\end{proof}

We now consider the mapping class group $\mcg$.
Recall that Thurston's boundary $\pmf$ of $\mathcal{T}(S)$ is homeomorphic to a sphere.
Any pseudo-Anosov element $g\in\mcg$ acts on $\pmf$ with north-south dynamics, 
and its fixed points $F_{+}(g)$ and $F_{-}(g)$ are uniquely ergodic.
 \begin{thm}\label{thm.ns}
 Let $g \in \mcg$ be a pseudo-Anosov element with fixed points $F_{+}(g), F_{-}(g)\in\pmf$
  and $h\in \mcg$ be such that $\{hF_{+}(g), hF_{-}(g)\}\cap \{F_{+}(g),F_{-}(g)\} = \emptyset$.
 Let $\mu_{i}$ be a probability measure defined by $\mu_{i}(g) = 1-1/i$ and $\mu_{i}(h) = 1/i$, and $\mu_{\infty}$ defined by $\mu_{\infty}(g) = 1$.
 Then $\ell(\mu_{i})$ converges to $\ell(g)$.
 \end{thm}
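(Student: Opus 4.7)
The plan is to combine the integral formula for the drift (Proposition \ref{prop.Tdrift}) with the weak convergence of stationary measures supplied by Lemma \ref{lem.ns2}. First, I would note that for each $i\geq 2$, the group generated by $\{g,h\}$ contains the two pseudo-Anosovs $g$ and $hgh^{-1}$, whose fixed-point sets in $\pmf$ are disjoint by the hypothesis on $h$. Hence both $\mu_i$ and $\breve\mu_i$ are non-elementary, and Theorem \ref{thm.KM} provides a unique $\breve\mu_i$-stationary measure $\breve\nu_i$ on $\pmf$. Proposition \ref{prop.Tdrift} then gives
\begin{equation*}
\ell(\mu_i) \;=\; \bigl(1-\tfrac{1}{i}\bigr)\!\int_{\pmf} c_B^{\mathrm{Th}}(g^{-1},\xi)\,d\breve\nu_i(\xi) \;+\; \tfrac{1}{i}\!\int_{\pmf} c_B^{\mathrm{Th}}(h^{-1},\xi)\,d\breve\nu_i(\xi).
\end{equation*}

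Next I would apply Lemma \ref{lem.ns2} to the reflected measures, playing the roles of ``$g$'' and ``$h$'' with $g^{-1}$ and $h^{-1}$. The element $g^{-1}$ has north-south dynamics on $\pmf$ with attracting fixed point $F_-(g)$ and repelling fixed point $F_+(g)$, and the disjointness hypothesis is invariant under the swap $h \leftrightarrow h^{-1}$ (apply $h^{-1}$ to the given identity). Lemma \ref{lem.ns2} then forces $\breve\nu_i \rightharpoonup \delta_{F_-(g)}$ weakly. Since $\xi\mapsto c_B^{\mathrm{Th}}(g',\xi)=\psi_\xi^{\mathrm{Th}}((g')^{-1}b)$ is continuous on $\pmf$ and uniformly bounded by $\max\bigl(d(g'b,b),d(b,g'b)\bigr)$, I can pass to the limit in the integral: the $\frac{1}{i}$ term vanishes and
\begin{equation*}
\ell(\mu_i)\;\longrightarrow\; c_B^{\mathrm{Th}}(g^{-1},F_-(g))\;=\;\psi_{F_-(g)}^{\mathrm{Th}}(gb).
\end{equation*}

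The main obstacle is to identify this limiting value with the translation distance $\ell(g)$. For this I would exploit that $F_-(g)$ is fixed by every power of $g$: iterating the cocycle identity yields
\begin{equation*}
\psi_{F_-(g)}^{\mathrm{Th}}(g^n b) \;=\; c_B^{\mathrm{Th}}(g^{-n},F_-(g)) \;=\; n\cdot c_B^{\mathrm{Th}}(g^{-1},F_-(g)) \;=\; n\cdot \psi_{F_-(g)}^{\mathrm{Th}}(gb),
\end{equation*}
so $\psi_{F_-(g)}^{\mathrm{Th}}(gb)=\psi_{F_-(g)}^{\mathrm{Th}}(g^n b)/n$ for every $n$, reducing the question to a Ces\`aro-type limit. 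I would then invoke Lemma \ref{lem.compare} for the sequence $x_n=g^n b$ (which converges in Thurston's compactification to $F_+(g)\in\mathcal{UE}(S)$) and the boundary point $\xi=F_-(g)$. Condition \ref{cond.seq} follows from Lemma \ref{lem.Tseq}: unique ergodicity of $F_+(g)$ ensures that the only $\eta\in\mathcal{P}$ with $i(F_+(g),\eta)=0$ is $F_+(g)$ itself, and $i(F_-(g),F_+(g))>0$ since two distinct uniquely ergodic foliations are transverse. Lemma \ref{lem.compare} then gives
\begin{equation*}
\psi_{F_-(g)}^{\mathrm{Th}}(gb)\;=\;\lim_{n\to\infty}\frac{\psi_{F_-(g)}^{\mathrm{Th}}(g^n b)}{n}\;=\;\lim_{n\to\infty}\frac{d(g^n b,b)}{n}\;=\;\ell(g),
\end{equation*}
which finishes the proof.
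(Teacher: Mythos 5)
Your proposal is correct and follows essentially the same route as the paper: both hinge on Lemma \ref{lem.ns2} applied to the reflected measures to get $\breve\nu_i\rightharpoonup\delta_{F_-(g)}$, on Lemma \ref{lem.Tseq} to verify Condition \ref{cond.seq} for the deterministic orbit $(g^nb)$ against $F_-(g)$, and on the integral formula for the drift. The only cosmetic difference is that the paper invokes Lemma \ref{lem.drift} and Theorem \ref{thm.conti} as black boxes to conclude $\ell(\mu_i)\to\ell(\mu_\infty)=\ell(g)$, whereas you inline that argument, passing to the limit in Proposition \ref{prop.Tdrift} directly and identifying $c_B^{\mathrm{Th}}(g^{-1},F_-(g))$ with $\ell(g)$ via the cocycle identity and Lemma \ref{lem.compare}.
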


\begin{proof}
First, note that $\{hF_{+}(g), hF_{-}(g)\}\cap \{F_{+}(g),F_{-}(g)\} = \emptyset$ implies that 
the group generated by the support of $\mu_{i}$ 
 contains two independent pseudo-Anosovs $g, hgh^{-1}$.
Hence $\mu_{i}$ is non-elementary.
By continuity of the action of $\mcg$ on $\pmf$, $\nu_{i}$ converges weekly to a $\mu_{\infty}$-stationary measure $\nu_{\infty}$.
Note that as $\mu_{\infty}$ is elementary, we do not have uniqueness of the $\mu_{\infty}$-stationary measure.
But by Lemma \ref{lem.ns2}, we see that $\nu_{\infty} = \delta_{F_{+}(g)}$.
The same argument applied to $\breve\mu_{i}$, we see that $\breve\nu_{\infty} = \delta_{F_{-}(g)}$.
By Lemma \ref{lem.Tseq} applied to $(g^{n}x)_{n\in\mathbb{N}}$ which converges to a uniquely ergodic 
foliation $F_{+}(g)$,
we see that 
for $\breve\nu_{\infty}$-a.e. $\xi$ (in this case $\xi = F_{-}(g)$) and 
$\mathbb{P}_{\infty}$-a.e. $\omega=(\omega_{n})_{n\in\mathbb{N}}$ (in this case $\omega_{n} = g^{n}$),
$\omega=(\omega_{n}b)_{n\in\mathbb{N}}$ and $\xi$ satisfy Condition \ref{cond.seq}.
Hence

$$\ell_{\mathrm{Th}}(\mu_{\infty}) \left(= \ell_{\mathrm{Th}}(g)\right) = \int c^{\mathrm{Th}}_{B}(g,\xi)d\breve\mu_{\infty}(g)d\breve\nu_{\infty}(\xi).$$
As $\breve L_{\mathrm{Th}}(\mu_{i})\rightarrow \breve L_{\mathrm{Th}}(\mu_{\infty})$, Theorem \ref{thm.conti} applies.
\end{proof}
\begin{rmk}
In $\mcg$, there are many elements with fixed point in $\mathcal{T}(S)$.
Let $a\in\mcg$ be such an element with fixed point $b\in\mathcal{T}(S)$.
We can also find a pseudo-Anosov $g$ whose axis does not path through $b$
(in fact axises of ``generic'' pseudo-Anosov elements do not pass through any point in $\mathcal{T}(S)$
 that can be fixed by an element of $\mcg$ \cite{Mas2,Mas3}).
 Then the probability measure $\mu_{i}$ defined by $\mu_{i}(a) = 1-1/i$,  
 and $\mu_{i}(g)=1/i$ is non-elementary, and converges to $\delta_{a}$.
One sees that $L(\mu_{i})\rightarrow L(\delta_{b})=0$ 
as the support of $\mu_{i}$ is the same finite set  for all $i\in\mathbb{N}$.
 Hence by Proposition \ref{prop.zero}, we see that the drift of non-elementary probability measure can be arbitrarily small.
On the other hand by Theorem \ref{thm.ns},  
we see that the drift of non-elementary probability measure can be arbitrarily large.
In particular, the drift of non-elementary probability measure can be any positive real number.
\end{rmk}

\appendix
\section{Continuity of entropy in $\mcg$}
Let $\mu$ be a probability measure on $\mcg$, which induces a probability measure $\mathbb{P}$ on $\mcg^{\mathbb{Z}_{+}}$.
The time one entropy $H(\mu)$ is defined by
$$H(\mu):= \sum_{g\in\mcg}\mu(g)(-\log\mu(g)).$$
The function $H$ is subadditive with the convolution.
Hence if $H(\mu)$ is finite, the following quantity is well-defined:
$$h(\mu) := \lim_{n\rightarrow\infty}H(\mu^{*n})/n,$$
and $h(\mu)$ is called the {\em asymptotic entropy}.
In this appendix, we consider the Teichm\"uller distance and fix a basepoint $b\in\mathcal{T}(S)$.
A probability measure $\mu$ is said to have finite logarithmic moment if 
$\sum_{g \in G} \mu(g)\log d_{\mathcal{T}}(gb,b)$ is finite.
The goal of this section is to prove the following theorem.
\begin{thm}\label{thm.entropy}
Let $\mu_{i}$ be a non-elementary probability measure on $\mcg$ for $ i\in\mathbb{N}\cup\{\infty\} $.
Suppose that $\mu_{i}$ has finite time one entropy and finite logarithmic moment with respect to the Teichm\"uller distance for $i\in\mathbb{N}\cup\{\infty\}$.
Then if a sequence of probability measures $(\mu_{i})_{i\in\mathbb{N}}$ converges simply to $\mu_{\infty}$ and $H(\mu_{i})\rightarrow H(\mu_{\infty})$,
then $h(\mu_{i})\rightarrow h(\mu_{\infty})$.
\end{thm}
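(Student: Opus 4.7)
The plan is to prove upper and lower semi-continuity of $h$ separately, leveraging the Kaimanovich-Masur identification of $(\mathcal{UE}(S),\nu_\infty)$ with the Poisson boundary of $(\mcg,\mu_\infty)$ in the non-elementary, finite log-moment regime, which is exactly the setting in which an integral formula for $h$ is available.

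For the upper bound I would use Fekete's lemma: since $H$ is subadditive under convolution, $h(\mu)=\inf_{n} H(\mu^{*n})/n$. The hypothesis $\mu_i\to\mu_\infty$ simply together with $H(\mu_i)\to H(\mu_\infty)<\infty$ forces $\mu_i\to\mu_\infty$ in total variation, by a Scheff\'e-type argument on the countable set $\mcg$ (pointwise convergence plus convergence of the strictly concave integral $\sum -\mu(g)\log\mu(g)$ precludes escape of mass). Total variation convergence is preserved under finitely many convolutions, so $\mu_i^{*n}\to\mu_\infty^{*n}$ in total variation for every fixed $n$, and the usual continuity of Shannon entropy on families of probability measures whose entropies converge yields $H(\mu_i^{*n})\to H(\mu_\infty^{*n})$. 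Hence $\limsup_i h(\mu_i)\leq H(\mu_\infty^{*n})/n$ for every $n$, and passing to $n\to\infty$ gives $\limsup_i h(\mu_i)\leq h(\mu_\infty)$.

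For the lower bound I would invoke Kaimanovich's entropy identity
\[
h(\mu)=E(\mu,\nu):=-\int_{\mcg\times\pmf}\log\frac{d g^{-1}\nu}{d\nu}(\xi)\,d\mu(g)\,d\nu(\xi),
\]
valid because $(\pmf,\nu)$ is the Poisson boundary of $(\mcg,\mu)$ under the standing assumptions. The general Kaimanovich theory always provides the inequality $E(\mu,\nu)\leq h(\mu)$, with equality characterising the Poisson boundary; so it applies to each $\mu_i$ together with its unique stationary measure $\nu_i$ from Theorem \ref{thm.KM}, and, as in the proof of Theorem \ref{thm.conti-Thurston}, $\nu_i$ converges weakly to $\nu_\infty$. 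If one can show $E(\mu_i,\nu_i)\to E(\mu_\infty,\nu_\infty)$ then $\liminf_i h(\mu_i)\geq h(\mu_\infty)$, which combined with the upper bound finishes the proof.

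The main obstacle is thus the joint continuity of $(\mu,\nu)\mapsto E(\mu,\nu)$. The density $\log(dg^{-1}\nu/d\nu)$ is not directly tractable, so my plan is to dominate the integrand: in the $\mcg$-setting this log-Radon-Nikodym derivative is controlled by a multiple of the Busemann cocycle $c_B^{\mathcal{T}}$ of Section \ref{sec.Teich} (via the Choi-Rafi comparison and the extremal-length description of horofunctions), so that $|{-\log(dg^{-1}\nu/d\nu)(\xi)}|\leq C(1+d_{\mathcal{T}}(gb,b))$ uniformly in $\xi$. The finite logarithmic moment hypothesis together with $H(\mu_i)\to H(\mu_\infty)$ then supplies a uniform integrable dominant, and dominated convergence combined with the simple convergence of $\mu_i$ and the weak convergence of $\nu_i$ delivers $E(\mu_i,\nu_i)\to E(\mu_\infty,\nu_\infty)$. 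The most delicate step in carrying this out is the uniform-in-$i$ integrability estimate on the $\mu_i$-side, where one must pass from convergence of entropies to a genuine equi-integrability statement.
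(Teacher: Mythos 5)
Your overall architecture --- upper semicontinuity from $h(\mu)=\inf_n H(\mu^{*n})/n$, lower semicontinuity from the Kaimanovich entropy identity $h(\mu)=E(\mu,\nu)$ applied to the Kaimanovich--Masur boundary --- is exactly the skeleton of the Gou\"ezel--Math\'eus--Maucourant argument that the paper invokes as a black box (Theorem \ref{thm.GMM}), and the upper-bound half is essentially sound, modulo the standard but not-quite-automatic lemma that simple convergence plus $H(\mu_{i})\rightarrow H(\mu_{\infty})$ forces $H(\mu_{i}^{*n})\rightarrow H(\mu_{\infty}^{*n})$ (Scheff\'e gives total variation convergence, but entropy is not continuous in total variation on a countable group; one needs equi-integrability of $-\log\mu_{i}$). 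The problem is the lower bound.

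The gap is the claimed domination $|\log(dg^{-1}_{*}\nu/d\nu)(\xi)|\leq C(1+d_{\mathcal{T}}(gb,b))$ uniformly in $\xi$, ``via the Choi--Rafi comparison and the extremal-length description of horofunctions.'' The Radon--Nikodym derivative of the stationary (harmonic) measure is not a geometric object: Choi--Rafi compares two metrics on $\mathcal{T}(S)$, and Miyachi/Liu--Su describe horofunctions, but none of this says anything about $dg^{-1}_{*}\nu/d\nu$. There is no identification of the Martin cocycle $c_{M}$ with a multiple of the Busemann cocycle $c_{B}^{\mathcal{T}}$; pointwise bounds of the kind you assert are Ancona-type estimates on the Martin kernel, which are not available here and are not even how \cite{GMM} proceeds for hyperbolic groups. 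Moreover you need such a bound uniformly in $i$, i.e.\ simultaneously for the whole family $\nu_{i}$. This is precisely the step the paper's appendix is devoted to: it proves Lemma \ref{lem.cnf} (each $F\in\pmf$ lies in at most $D$ shadows $\mathcal{O}(g,C)$ with $g$ in a fixed thickened sphere, via thinness of Teichm\"uller geodesics and proper discontinuity of the action) and Lemma \ref{lem.psl} ($\nu(g^{-1}\mathcal{O}(g,C))\geq 1-\epsilon$ uniformly in $g$, via Klarreich's result on Teichm\"uller rays), and these two shadow estimates are exactly the hypotheses under which the machinery of \cite{GMM} produces the required \emph{integrated} control of the Martin cocycle and hence the convergence of the Furstenberg entropies $E(\mu_{i},\nu_{i})$. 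To repair your argument you must either prove these two shadow lemmas (which is the actual content of the paper's proof) or supply a genuine proof of a pointwise or integrated bound on $\log(dg^{-1}_{*}\nu_{i}/d\nu_{i})$; as written, your dominated-convergence step has no dominant.
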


%
%

Our strategy is to imitate the argument in \cite{GMM} 
which proves the continuity of asymptotic entropy in the case where the group is Gromov hyperbolic.
We first define {\em shadows} in Teichm\"uller space.
Recall that by the work of Hubbard-Masur \cite{HM}, for a given point $x\in\mathcal{T}(S)$ and $F\in\mathcal{PMF}(S)$, 
there always exists a unique Teichm\"uller geodesic, which we denote by $\Gamma(x,F)$,
emanating from $x$ with corresponding horizontal foliation $F$.
Fixing $b\in\mathcal{T}(S)$ and assigning each Teichm\"uller ray the corresponding horizontal foliation, 
we have a compactification $\overline{\mathcal{T}(S)}$ which is called the Teichm\"uller compactification.

\begin{defi}
Fix $b\in\mathcal{T}(S)$.
Let $C>0$.
Then we define the shadow $\mathcal{O}(y,C)$ of $y\in\mathcal{T}(S)$ with radius $C$ by
$$\mathcal{O}(y,C):=\{F\in\mathcal{PMF}(S)\mid \Gamma(b,F)\cap B_{C}(y)\not= \emptyset\}$$
where $B_{C}(y)$ is the open ball of radius $C$ centered at $y$ with respect to the Teichm\"uller distance.
For $g\in\mcg$, we denote $\mathcal{O}(gb,C)$ by $\mathcal{O}(g,C)$.
Note that as $B_{C}(y)$ is open, $\mathcal{O}(y,C)$ is an open set (one can observe this from \cite[Proposition 11.13]{FaM} combined  with \cite{HM}). 
Hence in particular $\mathcal{O}(y,C)$ is measurable.
\end{defi}
We also let $$\mathbb{S}^{k}:=\{g\in\mcg\mid d_{\mathcal{T}}(b,gb)\in(k-1,k]\}$$ denote the thickened sphere.

We prepare two lemmas.
The first one is a general fact proved in \cite[Lemme 6.5]{Coo}.
Although in \cite{Coo} only hyperbolic groups are discussed, this statement holds in a wider context.
For the convenience of the reader, we give a statement that we need to prove Theorem \ref{thm.entropy} and give a proof.
\begin{lem}[Covering number is finite]\label{lem.cnf}
There exists $D>0$ such that for any $k\in\mathbb{N}$ and $F\in\pmf$,
$$|\{g\in\mathbb{S}^{k}\mid F\in\mathcal{O}(g,C)\}|\leq D.$$
\end{lem}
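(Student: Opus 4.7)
The plan is to show that any two mapping classes $g_1, g_2$ counted by the set on the left must map the basepoint to points that are within a \emph{uniformly} bounded Teichm\"uller distance of each other, and then invoke proper discontinuity of the $\mcg$-action on $\mathcal{T}(S)$ to conclude finiteness with a bound $D$ independent of $k$ and $F$.

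First I would unpack the definition of shadows. If $F \in \mathcal{O}(g,C)$ and $g \in \mathbb{S}^{k}$, then the Teichm\"uller ray $\Gamma(b,F)$ enters $B_{C}(gb)$, so there exists $t = t(g) \geq 0$ with $p := \Gamma(b,F)(t)$ satisfying $d_{\mathcal{T}}(gb, p) < C$. Because $\Gamma(b,F)$ is parametrized by arc length, $d_{\mathcal{T}}(b, p) = t$, and the triangle inequality combined with $d_{\mathcal{T}}(b,gb) \in (k-1,k]$ forces
\[
t(g) \in (k - 1 - C,\ k + C).
\]
Thus for any two such elements $g_1, g_2$ the corresponding parameters $t(g_1), t(g_2)$ differ by less than $2C + 1$, and since $\Gamma(b,F)$ is a geodesic, the associated points $p_1, p_2$ satisfy $d_{\mathcal{T}}(p_1, p_2) < 2C + 1$.

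Chaining through these points gives
\[
d_{\mathcal{T}}(g_1 b, g_2 b) \leq d_{\mathcal{T}}(g_1 b, p_1) + d_{\mathcal{T}}(p_1, p_2) + d_{\mathcal{T}}(p_2, g_2 b) < 4C + 1,
\]
which by isometry of the $\mcg$-action means $g_1^{-1} g_2$ lies in the set $R := \{h \in \mcg : d_{\mathcal{T}}(b, hb) < 4C + 1\}$. Since $\mathcal{T}(S)$ is proper for $d_{\mathcal{T}}$ and $\mcg$ acts properly discontinuously on $\mathcal{T}(S)$, the set $R$ is finite; let $D := |R|$. Fixing any one admissible $g_1$ (if none exists the bound is trivial), every other admissible $g_2$ equals $g_1 h$ for some $h \in R$, giving at most $D$ elements in total. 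The bound $D$ depends only on $C$ and $b$, not on $k$ or $F$, as required.

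The main conceptual obstacle — the one that forced Coornaert's $\delta$-hyperbolic version to be delicate — is usually the closest-point projection onto a geodesic. Here we sidestep it entirely: because the shadow is defined as a \emph{geodesic} ray hitting the ball, we obtain a canonical witness point $p$ on the ray at a controlled parameter, so no hyperbolicity or projection theory for $\mathcal{T}(S)$ (which is subtle and would require, e.g., the Rafi projection or the results of \cite{DDM}) is needed. The only property of $\mathcal{T}(S)$ used beyond existence and uniqueness of Teichm\"uller geodesics (Hubbard--Masur) is properness of $d_{\mathcal{T}}$ together with proper discontinuity of the $\mcg$-action.
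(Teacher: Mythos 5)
Your proof is correct and follows essentially the same route as the paper's: extract witness points $p_i$ on the ray $\Gamma(b,F)$ within $C$ of $g_ib$, bound $d_{\mathcal{T}}(b,p_i)$ via the triangle inequality, use that the $p_i$ lie on a common geodesic to bound $d_{\mathcal{T}}(p_1,p_2)$, and conclude by proper discontinuity of the $\mcg$-action. (Your constants $2C+1$ and $4C+1$ are in fact the correct ones; the paper's $2C-1$ and $4C-1$ appear to be sign slips.)
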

\begin{proof}
The proof we give here is almost identical to the one in \cite{Coo}.
Let $g_{1}$ and $g_{2}$ be in $\{g\in\mathbb{S}^{k}\mid F\in\mathcal{O}(gb,C)\}.$
Then by the definition of $\mathbb{S}^{k}$ and the shadows,
we see that there are points $p_{i}\in\Gamma(b,F)$ such that $\dt(g_{i}b,p_{i})< C$ for $i=1,2$.
Then by the triangle inequality, we have 
$$\dt(b, g_{i}b)-\dt(g_{i}b, p_{i})
\leq \dt(b,p_{i})\leq
\dt(b, g_{i}b)+\dt(g_{i}b, p_{i})
$$
which implies
$\dt(b,p_{i})\in(k-1-C,k+C]$ for both $i=1,2$.
As $p_{1}$ and $p_{2}$ are on the same geodesic, we have $\dt(p_{1},p_{2})< 2C-1$.
Hence we see that $\dt(g_{1}b,g_{2}b)< 4C-1$.
Now the conclusion follows from the properly discontinuity of the action of $\mcg$ on $\mathcal{T}(S)$.
\end{proof}

We also need to prove that preimage of the shadow has large $\nu$ measure.
\begin{lem}[Preimage of shadows are large]\label{lem.psl}
Let $\mu$ be a non-elementary probability measure on $\mcg$ and $\nu$ the unique $\mu$-stationary measure on $\pmf$.
For any $\epsilon>0$, there exists $C>0$ such that for any $g\in\mcg$
$$\nu(g^{-1}\mathcal{O}(g,C))\geq 1-\epsilon.$$
\end{lem}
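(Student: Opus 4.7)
The plan is to proceed by contradiction. Suppose the conclusion fails: there exist $\epsilon>0$, a sequence of group elements $g_n\in\mcg$, and radii $C_n\to\infty$ with $\nu(g_n^{-1}\mathcal{O}(g_n,C_n))<1-\epsilon$ for all $n$. First I would dispose of the bounded case: by proper discontinuity of the $\mcg$-action on $\mathcal{T}(S)$, if $\{g_nb\}$ were bounded then $\{g_n\}$ would be finite, and for each fixed $g$ the shadows $\mathcal{O}(g,C)$ exhaust $\pmf$ as $C\to\infty$, forcing $\nu(g_n^{-1}\mathcal{O}(g_n,C_n))\to1$, contradiction. Hence $\dt(g_nb,b)\to\infty$, and passing to a subsequence, compactness of the Teichm\"uller compactification $\overline{\mathcal{T}(S)}$ lets me assume $g_n^{-1}b\to F_\infty\in\pmf$.

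The geometric core is the claim that, for $\nu$-almost every $F\in\pmf$, there is a finite constant $D(F)$ (depending on $F$ but not on $n$) such that $\dt(b,\Gamma(g_n^{-1}b,F))\le D(F)$ for all sufficiently large $n$. By Theorem \ref{thm.KM}, $\nu$-a.e.\ such $F$ is uniquely ergodic, and since $\nu$ is non-atomic I may also exclude $F=F_\infty$. For these $F$, the Teichm\"uller rays $\Gamma(g_n^{-1}b,F)$ all end at $F$ while their basepoints converge to $F_\infty$; when $F_\infty$ is itself uniquely ergodic, they sub-converge to a subray of the unique biinfinite Teichm\"uller geodesic with vertical foliation $F_\infty$ and horizontal foliation $F$, which lies at finite distance from $b$. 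When $F_\infty$ is not uniquely ergodic, the same conclusion is extracted using the extremal-length formula (\ref{eq.tei-dist}) together with the Gardiner-Masur horofunction characterization of Lemma \ref{lem.Miyachi}, i.e.\ by comparing $\ext_{\Gamma(g_n^{-1}b,F)(t)}(\cdot)$ with $\ext_b(\cdot)$ and $\mathcal{E}_{F_\infty}(\cdot)$ as $t$ varies.

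Once the geometric step is in hand, the conclusion is immediate: for $\nu$-a.e.\ $F$ and all $n$ with $C_n>D(F)$ one has $F\in g_n^{-1}\mathcal{O}(g_n,C_n)$, so the indicators $\mathbf{1}_{g_n^{-1}\mathcal{O}(g_n,C_n)}$ converge pointwise $\nu$-a.e.\ to $1$, and bounded convergence yields $\nu(g_n^{-1}\mathcal{O}(g_n,C_n))\to1$, contradicting the assumed bound. The main obstacle I anticipate is precisely the geometric step: controlling the limiting behaviour of the Teichm\"uller rays $\Gamma(g_n^{-1}b,F)$ when $F_\infty$ fails to be uniquely ergodic. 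This is where the extremal-length machinery already introduced in Section \ref{sec.Teich} has to be used carefully, since a purely hyperbolic-type thin-triangle argument is unavailable. The remaining steps (contradiction setup, use of Theorem \ref{thm.KM}, and the bounded convergence conclusion) are then routine.
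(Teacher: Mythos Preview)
Your contradiction setup, the reduction to the unbounded case, the passage to a subsequence with $g_n^{-1}b\to F_\infty$, and the bounded-convergence conclusion all match the skeleton of the paper's argument. The divergence is exactly at your ``geometric core''.

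The paper does not split into cases according to whether $F_\infty$ is uniquely ergodic, nor does it use any of the extremal-length machinery from Section~\ref{sec.Teich}. It sets $C_n=\pmf\setminus g_n^{-1}\mathcal{O}(g_n,n)$, observes $\nu(\limsup_n C_n)\ge\epsilon$, and then for any $\xi$ in this $\limsup$ invokes \cite[Proposition~5.1]{Kla} to conclude that, along the subsequence of indices with $\xi\in C_n$, one has $g_n^{-1}b\to\xi$ in the Teichm\"uller compactification. Klarreich's proposition is precisely the statement that if the Teichm\"uller ray $\Gamma(p_n,\xi)$ stays outside larger and larger balls about $b$, then $p_n\to\xi$; it requires no hypothesis on $\xi$ or on the limit of $p_n$. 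Having already arranged $g_n^{-1}b\to F_\infty$, this forces $\xi=F_\infty$, so $\limsup_n C_n\subset\{F_\infty\}$, contradicting non-atomicity of $\nu$.

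Your proposed replacement for this step is where the genuine gap lies. Even in the uniquely ergodic case, the assertion that the rays $\Gamma(g_n^{-1}b,F)$ sub-converge to a sub-ray of the bi-infinite Teichm\"uller geodesic joining $F_\infty$ and $F$, and hence stay within bounded distance of $b$, is essentially the content of Klarreich's result (or of closely related visibility statements for Teichm\"uller geodesics) and is not a consequence of anything established earlier in the paper. In the non-uniquely-ergodic case your appeal to Kerckhoff's formula~(\ref{eq.tei-dist}) and Lemma~\ref{lem.Miyachi} is too vague to assess: those tools control ratios of extremal lengths along sequences converging in the Gardiner--Masur compactification, but they give no direct handle on $\dt\bigl(b,\Gamma(g_n^{-1}b,F)(t)\bigr)$ as $t$ varies, and the various compactifications need not agree at non-uniquely-ergodic points. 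In short, the obstacle you anticipated is real, and the paper resolves it in one line by citing Klarreich rather than by the extremal-length route you sketch.
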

\begin{proof}
Suppose the contrary that there exists $\epsilon>0$ such that for any $n\in\mathbb{N}$,
there exists $g_{n}\in\mcg$ such that $\nu(g_{n}^{-1}\mathcal{O}(g_{n},n))<1-\epsilon$.
We let $C_{n}:=\overline{\mathcal{T}(S)}\setminus g_{n}^{-1}\mathcal{O}(g_{n},n)$.
By construction we have, $\nu(C_{n})\geq\epsilon$.
We let $$U:=\bigcap_{k\in\mathbb{N}}\bigcup_{n\geq k}C_{n}.$$
Then $\nu(U)\geq \epsilon>0$, especially $U$ is non-empty.
Let $\xi\in U$ be an arbitrary element.
By the definition of $C_{n}$, we have $\dt(\Gamma(g_{n}b,\xi),b)>n$.
However, by the work of Klarreich \cite[Proposition 5.1]{Kla},
this can only happen if $g_{n}b\rightarrow \xi$ in the Teichm\"uller compactification.
Therefore we have $U=\{\xi\}$.
This contradicts the fact that $\nu$ is non-atomic.
\end{proof}

We use the following theorem, which is stated only for hyperbolic groups in \cite{GMM} but the proof works for more general groups.
\begin{thm}[{\cite{GMM}}]\label{thm.GMM}
Let $G$ be a group of isometries on a metric space $X$, and 
$\mu_{i}$ a probability measure on $G$ with finite logarithmic moment for $i\in\mathbb{N}\cup\{\infty\}$.
Suppose that we have a $\mu_{i}$-boundary $(\partial X,\nu_{i})$ with a unique non-atomic measure $\nu_{i}$ for every $i\in\mathbb{N}\cup\{\infty\}$.
Suppose further that  we can define shadows so that the conclusion of 
Lemma \ref{lem.cnf} and Lemma \ref{lem.psl} holds for large enough $i$.
Then for a sequence of probability measures $(\mu_{i})_{i\in\mathbb{N}}$ converging simply to $\mu_{\infty}$ with $H(\mu_{i})\rightarrow H(\mu_{\infty})$,
we have $h(\mu_{i})\rightarrow h(\mu_{\infty})$.
\end{thm}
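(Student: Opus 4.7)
The plan is to prove Theorem \ref{thm.GMM} by adapting verbatim the continuity-of-entropy argument from \cite{GMM} for word-hyperbolic groups, observing that the only geometric inputs to that argument are (i) a bound on the covering number of shadows on each thickened sphere, and (ii) a large-measure estimate for the $\nu$-pre-image of a shadow by its source element. Our hypotheses supply both directly as (analogues of) Lemma \ref{lem.cnf} and Lemma \ref{lem.psl}, so the work reduces to checking that the remaining ingredients of the GMM proof are genuinely abstract.

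First I would handle the upper bound $\limsup_i h(\mu_i)\le h(\mu_\infty)$ by subadditivity alone. Since $h(\mu)=\inf_n H(\mu^{*n})/n$, it suffices to show $H(\mu_i^{*n})\to H(\mu_\infty^{*n})$ for each fixed $n$. Simple convergence $\mu_i\to\mu_\infty$ yields $\mu_i^{*n}\to \mu_\infty^{*n}$ simply for each $n$. Combining this with $H(\mu_i)\to H(\mu_\infty)$, the uniform bound $H(\mu_i^{*n})\le nH(\mu_i)$, and a Scheff\'e-type argument (simple convergence together with convergence of total entropy upgrades to convergence in total variation, under which $H$ is continuous), one obtains $H(\mu_i^{*n})\to H(\mu_\infty^{*n})$. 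Taking $\limsup_i$ and then $\inf_n$ yields the desired upper bound.

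The lower bound $\liminf_i h(\mu_i)\ge h(\mu_\infty)$ is the main content. Here I would invoke the Derriennic--Kaimanovich form of the Shannon--McMillan--Breiman theorem: for $\mu$ of finite entropy and finite logarithmic moment, $-\tfrac{1}{n}\log \mu^{*n}(\omega_n)\to h(\mu)$ for $\mathbb P$-a.e.\ sample path $\omega$. Since $(\partial X,\nu)$ is a $\mu$-boundary, the sample paths almost surely acquire a boundary image $\xi\in\partial X$ with distribution $\nu$. The two shadow lemmas now let one replace $\mu^{*n}(\omega_n)$ by a shadow quantity: Lemma \ref{lem.psl} gives a uniform lower bound on the $\nu$-measure of the pre-image of a shadow, while Lemma \ref{lem.cnf} controls the multiplicity of shadows on each thickened sphere. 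Running the GMM argument with these substitutions yields a representation
$$h(\mu)=\int_{G\times\partial X}\Phi(g,\xi)\,d\mu(g)\,d\nu(\xi)$$
with an integrable kernel $\Phi$ built from shadow data that is continuous in $(\mu,\nu)$ under simple and weak convergence respectively.

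The main obstacle is ensuring weak convergence $\nu_i\to\nu_\infty$ together with uniform integrability of $\Phi$ in $i$, so as to pass the limit through the integral. Weak convergence follows because $\mathcal{P}(\partial X)$ is sequentially compact ($\partial X$ being compact and metrizable by the standing hypothesis), any weak limit point of $\nu_i$ is $\mu_\infty$-stationary by continuity of the $G$-action on $\partial X$ and simple convergence of $\mu_i$, and $\nu_\infty$ is the unique such measure by hypothesis. Uniform integrability of $\Phi$ is supplied by the finite logarithmic moment assumption together with the uniformity of the constants $D$ in Lemma \ref{lem.cnf} (which depends only on the ambient action) and $C$ in Lemma \ref{lem.psl} (which must hold for all sufficiently large $i$ by hypothesis). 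Dominated convergence then gives $\liminf_i h(\mu_i)\ge h(\mu_\infty)$ and completes the proof.
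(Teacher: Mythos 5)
Your overall strategy coincides with the paper's: the paper's proof of Theorem \ref{thm.GMM} is itself only a pointer to \cite{GMM}, asserting that the word metric can be replaced by $d(gb,b)$ and that Lemmas \ref{lem.cnf} and \ref{lem.psl} are the only geometric inputs needed for the proof of Theorem 2.6 and the arguments of Section 2.4 of \cite{GMM}. Your decomposition into an upper bound by subadditivity and a lower bound via an entropy formula over the $\mu$-boundary is exactly what those sections do; your unspecified kernel $\Phi$ should be identified as the Martin cocycle $c_{M}(g,\xi)=-\log(dg_{*}^{-1}\nu/d\nu)(\xi)$ of Proposition \ref{prop.Kai}, and the shadows enter precisely to give two-sided, $i$-uniform estimates on these Radon--Nikodym derivatives so that $\int c_{M}\,d\mu_{i}\,d\nu_{i}\rightarrow\int c_{M}\,d\mu_{\infty}\,d\nu_{\infty}$.

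One step of your upper bound is wrong as stated: entropy is \emph{not} continuous with respect to total variation convergence. For instance, if $u_{i}$ is uniform on a set of cardinality $2^{i^{2}}$ and $\mu_{i}=(1-1/i)\delta_{e}+(1/i)u_{i}$, then $\|\mu_{i}-\delta_{e}\|_{TV}=1/i\rightarrow 0$ while $H(\mu_{i})=i\log 2+o(1)\rightarrow\infty$. So ``Scheff\'e plus TV-continuity of $H$'' does not yield $H(\mu_{i}^{*n})\rightarrow H(\mu_{\infty}^{*n})$. The correct mechanism, which is the one in \cite{GMM} (going back to Erschler--Kaimanovich \cite{EK}), is that both $\mu\mapsto H(\mu^{*n})$ and $\mu\mapsto nH(\mu)-H(\mu^{*n})$ are lower semicontinuous under simple convergence --- the latter being the conditional entropy of $\mu^{\otimes n}$ given the product map, a sum of nonnegative terms each continuous in $\mu$ --- and their sum $nH(\mu_{i})$ converges to $nH(\mu_{\infty})$ by hypothesis, forcing each summand to converge. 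With that repair, and with the acknowledgement that the continuity of the integrals of the Martin cocycles is precisely the content of Theorem 2.6 of \cite{GMM} (for which the two shadow lemmas are the required input, as the paper notes), your outline matches the intended proof.
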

\begin{proof}[Sketch of the proof]
This is essentially proved in \cite{GMM}.
We only give comments on how we can get the statement from the discussion in \cite{GMM}.
The condition of the shadows is necessary for the proof of \cite[Theorem 2.6]{GMM}.
Note that we need to replace the word metric $|g|$ with $d(gb,b)$.
Then, by following the same arguments as in \cite[Section 2.4]{GMM}, we get the conclusion.
\end{proof}
\begin{proof}[Proof of Theorem \ref{thm.entropy}]

In \cite{GMM}, the hyperbolicity of the group is used only to have
\begin{enumerate}
\item  the Gromov boundary $\partial G$ equipped with a unique $\mu$-stationary measure $\nu$ is
 a $\mu$-boundary, and
\item the shadows so that the conclusion of Lemma \ref{lem.cnf} and Lemma \ref{lem.psl} holds.
\end{enumerate}
For hyperbolic groups, (1) is proved by Kaimanovich \cite{Kai}, and 
(2) is proved by standard arguments of $\delta$-thin triangles.
For $\mcg$, (1) is given in Theorem \ref{thm.KM}, and
Lemma \ref{lem.cnf} and Lemma \ref{lem.psl} give (2).
Then by Theorem \ref{thm.GMM} we get the conclusion.
\end{proof}

\begin{rmk}
In \cite{GMM}, the following proposition is stated as a work in \cite{Kai}.
\begin{prop}[{\cite[Proposition 2.5]{GMM}}]\label{prop.Kai}
Let $G$ be a hyperbolic group.
Let $\mu$ be a non-elementary probability measure on $G$ with $H (\mu) < \infty$.
Let $\nu$ be its unique stationary measure on $\partial G$.
Define the Martin cocycle on $G\times\partial G$ by $c_{M}(g,\xi) = -log(dg_{*}^{-1}\nu/d\nu)(\xi)$. 
 Then 
 $$h(\mu)\geq
\int_{G\times \partial G}c_{M}(g,\xi)d\mu(g)d\nu(\xi),$$
 with equality only if $\mu$ has finite logarithmic moment.
\end{prop}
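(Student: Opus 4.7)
The plan is to deduce the proposition from the general theory of the Poisson boundary as developed by Kaimanovich. The first observation is that, by definition of the Martin cocycle,
$$\int_{G\times\partial G} c_M(g,\xi)\, d\mu(g)\, d\nu(\xi) = -\int_G\int_{\partial G}\log\frac{dg^{-1}_*\nu}{d\nu}(\xi)\, d\nu(\xi)\, d\mu(g),$$
which is precisely the \emph{Furstenberg entropy} $h_F(\partial G,\nu;\mu)$ of the $\mu$-boundary $(\partial G,\nu)$. That $(\partial G,\nu)$ is a $\mu$-boundary in the hyperbolic-group setting is the analog of Theorem \ref{thm.KM}, also due to Kaimanovich. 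With this identification, the inequality in the proposition becomes a special case of the general Furstenberg entropy bound $h_F(B,\lambda;\mu)\le h(\mu)$, valid for any $\mu$-boundary $(B,\lambda)$, and the job splits cleanly into proving this inequality and analyzing its equality case.

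For the Furstenberg entropy bound, I would use the disintegration $\mathbb{P} = \int_B \mathbb{P}_\xi\, d\lambda(\xi)$ of the path-space measure along the boundary map $\mathrm{bnd}:G^{\mathbb{Z}_+}\to B$. Expanding the Radon-Nikodym derivative $d\mathbb{P}_\xi/d\mathbb{P}$ on the cylinder $[\omega_1,\dots,\omega_n]$ via the cocycle relation identifies $-n^{-1}\log(d\mathbb{P}_\xi/d\mathbb{P})$ with an ergodic average whose $\mathbb{P}\otimes\lambda$-almost sure limit is $h_F(B,\lambda;\mu)$. Combined with the Shannon-McMillan-Breiman theorem giving $-n^{-1}\log\mathbb{P}([\omega_1,\dots,\omega_n])\to h(\mu)$, and with the non-negativity of the conditional contribution $-n^{-1}\log\mathbb{P}_\xi([\omega_1,\dots,\omega_n])$ in the limit, one extracts $h_F(B,\lambda;\mu)\le h(\mu)$. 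The passage to the limit needs uniform integrability, which is where the hypothesis $H(\mu)<\infty$ enters essentially.

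The equality case is the harder half and the main obstacle. Kaimanovich's entropy criterion asserts that $h_F(\partial G,\nu;\mu) = h(\mu)$ if and only if $(\partial G,\nu)$ realizes the full Poisson boundary of $(G,\mu)$. For non-elementary random walks on hyperbolic groups, Kaimanovich established this identification via the \emph{strip criterion}, which asks for a $G$-equivariant assignment of a thin "strip" of group elements to $\nu\otimes\breve\nu$-almost every pair $(\xi_+,\xi_-)\in\partial G\times\partial G$, whose intersection with balls grows at most polynomially. In hyperbolic groups, such strips are produced from $\delta$-thin bi-infinite geodesics between $\xi_-$ and $\xi_+$; the integrability input needed to turn the strip criterion into the identification of the Poisson boundary is precisely $\sum_g \mu(g)\log d(gb,b)<\infty$, i.e.\ finite logarithmic moment. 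The delicate point is the converse: one must show that if the strip criterion fails, e.g.\ because $\mu$ has infinite logarithmic moment, then $(\partial G,\nu)$ is a strict factor of the Poisson boundary, so some tail-measurable event is not detected by $\mathrm{bnd}$, forcing strict inequality $h_F(\partial G,\nu;\mu)<h(\mu)$. Setting up this contrapositive carefully, using the exit behavior of the random walk in the Cayley graph, is the technical heart of the equality statement.
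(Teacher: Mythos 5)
Your first two paragraphs reconstruct essentially the route the paper itself intends: the paper gives no self-contained proof of this proposition, but the remark following it points to Kaimanovich's arXiv v1 of \cite{Kai}, Sections 10.3 and 10.4 (which identify $\int c_{M}\,d\mu\,d\nu$ with the Furstenberg/differential entropy of the $\mu$-boundary and prove the inequality $h_{F}\leq h(\mu)$ via the conditional walks and the Shannon--McMillan--Breiman theorem, exactly as you sketch), together with Theorem 16.10 there (the strip-criterion identification of $(\partial G,\nu)$ with the Poisson boundary under finite entropy and finite logarithmic moment). Up to the middle of your third paragraph you are therefore doing the same thing as the cited source, just unpacked.

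The gap is in the second half of your third paragraph. The clause should read ``with equality \emph{if} $\mu$ has finite logarithmic moment,'' as in \cite[Proposition 2.5]{GMM}; the ``only if'' printed here is a misprint, and the argument the paper cites delivers precisely the ``if'' direction --- which you have already obtained once you invoke the entropy criterion together with the strip criterion. The direction you then single out as ``the technical heart,'' namely that equality forces finite logarithmic moment, is not what is being asserted, and your proposed route to it cannot work: the strip criterion is a \emph{sufficient} condition for maximality of a $\mu$-boundary, so the failure of its integrability hypothesis gives no information about whether $(\partial G,\nu)$ is a strict factor of the Poisson boundary. There is no implication of the form ``strip criterion inapplicable $\Rightarrow$ strict inequality,'' and indeed later work (Chawla--Forghani--Frisch--Tiozzo) shows that for hyperbolic groups finite entropy alone already makes the Gromov boundary maximal, so the literal ``only if'' statement is false. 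Delete that contrapositive; what remains of your proposal is exactly Kaimanovich's argument and suffices for the statement as it is actually used in the appendix.
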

In the version 1 on arXiv of \cite{Kai}, 
the Proposition \ref{prop.Kai} is proved in section 10.3 and section 10.4 together with Theorem 16.10.
The discussion in section 10 of \cite{Kai} works for any countable group, 
and the mapping class group version of \cite[Theorem 16.10]{Kai} is \cite[Theorem 2.3.1]{KM}.
\end{rmk}

%


\end{document}